\newtheorem{theorem}{Theorem}    
\newtheorem{lemma}[theorem]{Lemma}
\newtheorem{remark}[theorem]{Remark}
\newtheorem{definition}[theorem]{Definition}
\theoremstyle{definition}
\numberwithin{theorem}{section} \numberwithin{theorem}{section}
\numberwithin{equation}{section}
\begin{document}
\title[A note on fractional type integrals in the Schr\"{o}dinger setting]
{A note on fractional type integrals in the Schr\"{o}dinger setting}

\author{Yongming Wen}

\subjclass[2020]{
42B20; 42B25; 35J10.
}

%
\keywords{Schr\"{o}dinger operators, fractional integrals, fractional maximal operators, quantitative weighted bounds, mixed weak type inequalities.}
\thanks{Supported by the Natural Science Foundation of Fujian Province(Nos. 2021J05188), President's fund of Minnan Normal University (No. KJ2020020), Institute of Meteorological Big Data-Digital Fujian, Fujian Key Laboratory of Data Science and Statistics and Fujian Key Laboratory of Granular Computing and Applications (Minnan Normal University), China.}
\address{School of Mathematics and Statistics, Minnan Normal University, Zhangzhou 363000,  China} \email{wenyongmingxmu@163.com}



\begin{abstract}
Assume $\mathcal{L}=-\Delta+V$ is a Schr\"{o}dinger operator on $\mathbb{R}^d$, where $V$ belongs to certain reverse H\"{o}lder class $RH_\sigma$ with $\sigma\geq d/2$. We consider the class of $A_{p,q}$ weights associated to $\mathcal{L}$, denoted by $A_{p,q}^{\mathcal{L}}(\mathbb{R}^d)$, which include the classical Muckenhoupt $A_{p,q}(\mathbb{R}^d)$ weights. We obtain the quantitative $A_{p,q}^{\mathcal{L}}(\mathbb{R}^d)$ estimates for fractional integrals associated to the Schr\"{o}dinger operator. Particularly, the quantitative weighted endpoint bound for fractional integrals associated to the Schr\"{o}dinger operator is first established, which was missing in the literature of Li et al. \cite{LRW}. Moreover, we generalize weighted endpoint inequalities to weighted mixed weak type inequalities for fractional type integrals in the Schr\"{o}dinger setting.
\end{abstract}

\maketitle

\section{Introduction and main results}
\subsection{Background}

In recent years, the problem of quantitative weighted estimates for operators in Laplacian
setting have appealed to many mathematician. The initial result was opened by Buckley \cite{Buckley}, who proved the following sharp weighted estimates for Hardy-Littlewood maximal operator $M$
\begin{align}\label{HL}
\|Mf\|_{L^p(\omega)}\lesssim[\omega]_{A_p(\mathbb{R}^d)}^{1/(p-1)}\|f\|_{L^p(\omega)},~1<p<\infty.
\end{align}
To resolve an important endpoint result in the theory of quasiconformal mappings that had been conjecture by Astala, Iwaniec and Saksman \cite{AIS}, Petermichl and Volberg \cite{PV} settled the quantitative weighted estimates for Beurling transform. Petermichl \cite{P,P1} also obtained the sharp bounds for the Hilbert and Riesz transforms. While for general Calder\'{o}n-Zygmund operators, Hyt\"{o}nen \cite{Hy} proved the dyadic representation theorem for Calder\'{o}n-Zygmund operators, this leads to the proof of $A_2$ conjecture. In 2010, Lacey, Moen, P\'{e}rez and Torres \cite{LaMPT} obtained sharp bounds for the classical fractional integral operators
$$I_\alpha f(x)=\int_{\mathbb{R}^d}\frac{f(y)}{|x-y|^{n-\alpha}}dy$$
as follows.
\begin{theorem}{\rm(cf. \cite{LaMPT})}\label{classical fractional}
Suppose that $0<\alpha<d$, $1\leq p<d/\alpha$ and that $q$ satisfies $1/q=1/p-\alpha/d$. Then
\begin{align*}
\|I_\alpha f\|_{L^q(\omega^q)}\lesssim[\omega]_{A_{p,q}(\mathbb{R}^d)}^{(1-\frac{\alpha}{d})\max\{1,p'/q\}}
\|f\|_{L^p(\omega^p)},~1<p<d/\alpha,
\end{align*}
and
\begin{align*}
\|I_\alpha f\|_{L^{q,\infty}(\omega^q)}\lesssim[\omega]_{A_{p,q}(\mathbb{R}^d)}^{1-\frac{\alpha}{d}}
\|f\|_{L^p(\omega^p)},~
1\leq p<d/\alpha.
\end{align*}
Furthermore, both of two results above are sharp.
\end{theorem}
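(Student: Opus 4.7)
The plan is to follow the dyadic / two-weight testing strategy. First, via a standard three-grid argument, I would dominate $I_\alpha$ in bilinear form by a sparse fractional form
$$\mathcal{A}_{\alpha,\mathcal{S}}f(x) = \sum_{Q\in\mathcal{S}}|Q|^{\alpha/d}\langle|f|\rangle_Q \mathbf{1}_Q(x),$$
uniformly in the sparse family $\mathcal{S}$ inside a dyadic grid $\mathcal{D}$. This converts the kernel-based problem into an averaging one and puts both inequalities within reach of weighted Carleson-type estimates.

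For the strong bound, I would pair $\mathcal{A}_{\alpha,\mathcal{S}}f$ against a nonnegative $g\in L^{q'}(\omega^{-q'})$, producing the bilinear form $\sum_{Q\in\mathcal{S}}|Q|^{\alpha/d}\langle f\rangle_Q\langle g\rangle_Q|Q|$. Factoring $f=(f\omega)\omega^{-1}$ and $g=(g\omega^{-1})\omega$, applying H\"older inside each average together with the $A_{p,q}$ bound for $\langle \omega^q\rangle_Q$ and $\langle \omega^{-p'}\rangle_Q$, and using the sparsity of $\mathcal{S}$ to telescope the resulting averages into dyadic maximal operators with respect to the measures $\omega^p\,dx$ and $\omega^{-q'}\,dx$, one obtains the claimed power $(1-\alpha/d)\max\{1,p'/q\}$ through a universal $L^r$ maximal bound in those measures. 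The dichotomy in the exponent corresponds to whether $p'/q\leq 1$ or $p'/q>1$, reflecting on which side of the duality the extra factor $[\omega]_{A_{p,q}}$ (arising from a Buckley-type estimate \eqref{HL}) needs to be absorbed.

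The weak-type endpoint is more delicate. I would perform a Calder\'on--Zygmund decomposition of $f$ at height $\lambda$, treat the good part by the just-proved strong-type bound at some $p_0>p$, and handle the bad part by exploiting cancellation together with the standard tail estimate $|I_\alpha b_Q(x)|\lesssim |Q|^{1+\alpha/d}|x-x_Q|^{\alpha-d}$ off a fixed dilate of $Q$. The main obstacle, and the essence of why this case is nontrivial, is to arrange the bookkeeping so that the $A_{p,q}$ constant is invoked only once, in converting a Lebesgue-based level-set estimate into a weighted one, so that the final exponent is $1-\alpha/d$ rather than the larger strong-type exponent one would obtain by naive Marcinkiewicz interpolation. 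Sharpness of both exponents is then verified with the family of power weights $\omega(x)=|x|^{a}$ and $f=\mathbf{1}_{B(0,1)}$, by letting $a$ approach the admissibility boundary of $A_{p,q}$ and matching the blow-up of $[\omega]_{A_{p,q}}$ against the explicit left-hand norms.
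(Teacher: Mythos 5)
The paper does not prove Theorem \ref{classical fractional}: it states it with the attribution ``(cf.\ \cite{LaMPT})'' and uses it later as a black box (inside the proof of Lemma \ref{local chuli}). So there is no proof in this paper to compare your sketch against, and what follows is an assessment of the sketch on its own terms and against the proof in \cite{LaMPT}.

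Your treatment of the strong bound is a legitimate modern route. Dominating $I_\alpha$ by sparse forms and then pairing against $g\in L^{q'}(\omega^{-q'})$, applying H\"older with the $A_{p,q}$ averages, and reducing to universal bounds for dyadic maximal functions with respect to $\omega^p\,dx$ and $\omega^{-q'}\,dx$ is the sparse-language version of the dyadic positive-operator argument that \cite{LaMPT} actually run, and it does produce the exponent $(1-\alpha/d)\max\{1,p'/q\}$, with the dichotomy arising exactly as you say. That part is sound.

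The weak-type endpoint is where you have a genuine gap. A Calder\'on--Zygmund decomposition of $f$ at height $\lambda$ does not mesh cleanly with an off-diagonal weak $(p,q)$ estimate: the good part is controlled by a strong $(p_0,q_0)$ bound at some $p_0>p$, but the exponent appearing there is $(1-\alpha/d)\max\{1,p_0'/q_0\}$, which for $p_0$ near $p$ stays strictly larger than $1-\alpha/d$ whenever $p'>q$, and moreover the level set would naturally be measured by $\omega^{q_0}$ rather than $\omega^q$, while $\omega\in A_{p,q}$ does not transfer to $A_{p_0,q_0}$ with a controlled constant. The sentence ``arrange the bookkeeping so that the $A_{p,q}$ constant is invoked only once'' is precisely the missing content; as written, it is a restatement of the goal, not an argument. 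The estimate you quote for the bad part, $|I_\alpha b_Q(x)|\lesssim |Q|^{1+\alpha/d}|x-x_Q|^{\alpha-d}$, also does not match the cancellation bound one actually gets (one should see a factor $\ell(Q)|x-x_Q|^{\alpha-d-1}\|b_Q\|_1$). The proof in \cite{LaMPT} does not go through a CZ decomposition; it works directly on the dyadic positive operator with a stopping-time/testing argument that measures the level set with $\omega^q$ from the start, which is how the single power $1-\alpha/d$ emerges. For the sharpness claim, power weights $|x|^a$ near the $A_{p,q}$ boundary are indeed what \cite{LaMPT} use, but one must choose $f$ as a matching power (not just $\mathbf{1}_{B(0,1)}$) to saturate both exponents; as written this step is also only gestured at.
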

Very recently, under the umbrella of ``domination by sparse operators'', which give a new way to obtain the quantitative weighted estimates for harmonic analysis operators, there is a vast literature concerned with this topic, see \cite{CaIRXY,CaXYa,CaYa,ChLiHo,CoCuDiOu,DoLaR,HyLi,LaLi,Ler,Ler1,Li,LiPeRiRo} et al., among these, the work of Lerner \cite{Ler,Ler1} played a center role. Quantitative weighted bounds for operators in the Schr\"{o}dinger setting began by the work of Li, Rahm and Wick \cite{LRW}. To state their results, we first recall some necessary definitions. We consider the Schr\"{o}dinger operator $\mathcal{L}$ on $\mathbb{R}^d$ with $d\geq3$,
$$\mathcal{L}=-\Delta+V,$$
where $\Delta$ is the standard Laplacian operator in $\mathbb{R}^d$ and $V$ is a non-negative potential which belongs to certain reverse H\"{o}lder class $RH_\sigma$ ($\sigma>d/2$), that is,
$$\Big(\frac{1}{|B|}\int_BV(y)^\sigma dy\Big)^{1/\sigma}\leq\frac{C}{|B|}\int_BV(y)dy,$$
for every ball $B\subset \mathbb{R}^d$. Define fractional integral $\mathcal{L}^{-\alpha/2}$ and fractional maximal function $M_\alpha^{\rho,\theta}$ as below.
\begin{align*}
\mathcal{L}^{-\alpha/2}f(x)=\int_0^\infty e^{-t\mathcal{L}}f(x)t^{\frac{\alpha}{2}-1}dt,~0<\alpha<d,
\end{align*}
\begin{align*}
M_\alpha^{\rho,\theta}f(x)=\sup_{Q\ni x}\frac{1}{\psi_\theta(Q)|Q|^{1-\frac{\alpha}{d}}}\int_Q|f(y)|dy,~\theta\geq0,
\end{align*}
where $\psi_\theta(Q):=(1+r_Q/\rho(x_Q))^\theta$, $\rho$ is the critical radius function (see Section 2 for a precise definition), and $x_Q$, $r_Q$ are the center of cube $Q$ and the side-length of $Q$, respectively. In \cite{LRW}, the authors obtained following quantitative weighted estimates for $\mathcal{L}^{-\alpha/2}$ and $M_\alpha^{\rho,\theta}$.
\begin{theorem}{\rm(cf. \cite{LRW})}\label{schrodinger fractional strong type}
Suppose that $0<\alpha<d$. Let $1\leq p<d/\alpha$ and that $q$ satisfies $1/q=1/p-\alpha/d$. Let $\theta\geq0$, $\gamma=\theta/\big(1+\frac{p'}{q}\big)$ and $K$ be defined by the equation $\big(\frac{1}{K}+\frac{q}{Kp'}\big)\big(1-\frac{\alpha}{d}\big)\max\{1,p'/q\}=1/2$. Then
\begin{align*}
\|\mathcal{L}^{-\alpha/2} f\|_{L^q(\omega^q)}\lesssim[\omega]_{A_{p,q}^{\rho,\frac{\theta}{3K}}(\mathbb{R}^d)}
^{(1-\frac{\alpha}{d})\max\{1,p'/q\}}
\|f\|_{L^p(\omega^p)},~\omega\in A_{p,q}^{\rho,\frac{\theta}{3K}}(\mathbb{R}^d),
\end{align*}
and
\begin{align*}
\|M_\alpha^{\rho,\theta} f\|_{L^{q}(\omega^q)}\lesssim[\omega]_{A_{p,q}^{\rho,\frac{\gamma}{3}}(\mathbb{R}^d)}^
{p'(1-\frac{\alpha}{d})/q}
\|f\|_{L^p(\omega^p)},~\omega\in A_{p,q}^{\rho,\frac{\gamma}{3}}(\mathbb{R}^d).
\end{align*}
\end{theorem}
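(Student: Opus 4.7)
The plan is to establish both estimates via sparse domination adapted to the critical radius function $\rho$, combined with quantitative two-weight bounds for the resulting sparse operators. For the fractional maximal operator $M_\alpha^{\rho,\theta}$, I would run a Calder\'on--Zygmund stopping-time argument that selects cubes where the modified average $|Q|^{\alpha/d-1}\psi_\theta(Q)^{-1}\int_Q|f|$ is close to its supremum. This yields a sparse family $\mathcal{S}$ such that
$$M_\alpha^{\rho,\theta} f(x)\;\lesssim\; \sum_{Q\in\mathcal{S}}\frac{|Q|^{\alpha/d}}{\psi_\theta(Q)}\,\langle|f|\rangle_Q\,\mathbf{1}_Q(x)\;=:\;\mathcal{A}_{\alpha,\theta,\mathcal{S}}f(x).$$
For $\mathcal{L}^{-\alpha/2}$ I would work with its integral kernel $K_\alpha(x,y)=\int_0^\infty p_t^{\mathcal{L}}(x,y)\,t^{\alpha/2-1}\,dt$, where the heat kernel $p_t^{\mathcal{L}}$ carries an additional polynomial/sub-exponential decay factor in $\sqrt{t}/\rho(x)$ provided by the $RH_\sigma$ assumption on $V$. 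Integrating out $t$, the kernel is pointwise dominated by $|x-y|^{\alpha-d}$ times a factor decaying in $|x-y|/\rho(x)$. A Lerner-type sparse domination principle then delivers $|\mathcal{L}^{-\alpha/2}f(x)|\lesssim\mathcal{A}_{\alpha,\theta,\mathcal{S}}f(x)$ for a $\theta$ large enough to absorb the non-local tail.

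The next step is to prove quantitative two-weight bounds for $\mathcal{A}_{\alpha,\theta,\mathcal{S}}$. Dualising via the Lacey--Moen--P\'erez--Torres scheme reduces the $L^p(\omega^p)\to L^q(\omega^q)$ inequality to Sawyer-type testing conditions on cubes, which can be controlled by $[\omega]_{A_{p,q}^{\rho,\tau}}^{\beta}$ for a suitable $\tau<\theta$ and a candidate exponent $\beta$. A reverse H\"older self-improvement for $A_{p,q}^{\rho,\tau}$ weights, at the cost of a further small decrease in $\tau$, lets one reduce $\beta$ to the target value. For the positive operator $M_\alpha^{\rho,\theta}$ only the primal testing condition is needed, giving the cleaner exponent $(1-\alpha/d)p'/q$; for $\mathcal{L}^{-\alpha/2}$ one needs both primal and dual testing conditions, and taking the worst case produces $(1-\alpha/d)\max\{1,p'/q\}$.

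The main obstacle is the bookkeeping of the $\theta$-parameter through this chain. Because the sparse operator carries $\psi_\theta^{-1}$ while the weight is only assumed to lie in $A_{p,q}^{\rho,\tau}$ for some $\tau$ strictly smaller than $\theta$, both the testing step and the self-improvement step consume a portion of the available ``budget''. The specific values $\theta/(3K)$ and $\gamma/3$ in the statement are chosen to balance the exponent coming from Buckley-type summation of testing inequalities against the exponent saved through self-improvement. The defining equation $\bigl(\tfrac{1}{K}+\tfrac{q}{Kp'}\bigr)(1-\alpha/d)\max\{1,p'/q\}=\tfrac{1}{2}$ is exactly the condition that makes this trade-off yield the sharp exponent $(1-\alpha/d)\max\{1,p'/q\}$, while the factor $3$ comes from the geometric constant in the reverse H\"older argument in the Schr\"odinger setting.
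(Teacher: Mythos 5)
This theorem is not proved in the paper under review: it is quoted from Li, Rahm, and Wick \cite{LRW} and used as a black box --- in particular, the $M_\alpha^{\rho,\theta}$ strong bound is invoked in the proof of Theorem \ref{schrodinger fractional weak type}(i) to handle the global part $\mathcal{L}^{-\alpha/2}f_2$. There is therefore no in-paper argument against which to measure your proposal, and the only fair comparison is with the methodology of \cite{LRW} itself.

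Your plan, a sparse domination adapted to the critical radius function $\rho$ followed by quantitative two-weight testing bounds for the resulting sparse operators, is broadly consistent with how \cite{LRW} proceed. However, several steps in your outline are asserted rather than demonstrated and would need substantial work to close. First, the claim that a reverse H\"older self-improvement for $A_{p,q}^{\rho,\tau}$ weights, at the cost of a small decrease in $\tau$, lets you ``reduce $\beta$ to the target value'' is not an off-the-shelf fact: openness and self-improvement for the Schr\"odinger-adapted classes are delicate precisely because of the $\psi_\theta$ factors, and how the admissible $\theta$ in the weight class relates to the $\theta$ carried by the operator is the heart of the bookkeeping, not a perturbation that can be absorbed for free. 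Second, attributing the factor $3$ in $\theta/(3K)$ and $\gamma/3$ to ``the geometric constant in the reverse H\"older argument'' is a guess; those constants come out of the specific enlargement of cubes when passing between the localized sparse family and the weight definition and would have to be recomputed from the actual sparse construction. Third, the equation defining $K$ is presented as whatever makes the trade-off come out right, which is reverse-engineering the answer rather than deriving it from the estimates. Since the result is only cited here, the honest path is to reproduce the calculation in \cite{LRW}; your outline is a reasonable scaffold for doing so, but it does not yet constitute a proof of this theorem.
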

Subsequently, Zhang and Yang \cite{ZhYa} obtained quantitative weighted strong $(p,p)$ type estimates for Littlewood-Paley operators associated to $\mathcal{L}$. The author and Wu \cite{WW} recently investigated quantitative endpoint estimates for maximal operators and variation operators associated to $\mathcal{L}$. More generally, Bui et al. \cite{BuiBD,BuiBD1} achieved quantitative weighted strong $(p,p)$ type estimates for square functions and singular integrals associated to general differential operators.

On the other hand, to seek a new proof for the qualitative estimate of \eqref{HL}, in the case of dimension 1, Sawyer \cite{S} showed that it suffices to establish the following result: if $\mu,\nu\in A_1(\mathbb{R})$, then
\begin{align}\label{HL mix}
\Big\|\frac{M(f\nu)}{\nu}\Big\|_{L^{1,\infty}(\mu\nu)}\lesssim\|f\|_{L^1(\mu\nu)}.
\end{align}
It is obvious that \eqref{HL mix} reduces to the weighted weak $(1,1)$ type of $M$ when $\nu=1$. However, the proof of \eqref{HL mix} is highly non-trivial due to the covering lemmas do not apply for $M(f\nu)/\nu$ and $\mu\nu$ may be very singular. We refer these types of estimates as weighted mixed weak type estimates. Later, Cruz-Uribe, Martell and P\'{e}rez \cite{CrMP} extended \eqref{HL mix} to higher dimension and Calder\'{o}n-Zygmund with weights $\mu,\nu\in A_1(\mathbb{R}^d)$ or $\mu\in A_1(\mathbb{R}^d)$ and $\nu\in A_\infty(\mu)$. Li, Ombrosi and P\'{e}rez \cite{LiOP} improved the results in \cite{CrMP} by assuming $\mu\in A_1(\mathbb{R}^d)$, $\nu\in A_\infty(\mathbb{R}^d)$. In the case of fractional type integrals, Berra, Carena and Pradolini \cite{BeCaPr} proved the following weighted mixed weak type inequalities.
\begin{theorem}{\rm(cf. \cite{BeCaPr})}\label{mix fractional}
Let $0<\alpha<d$, $1\leq p<d/\alpha$ and $q$ satisfy $1/q=1/p-\alpha/d$. If $\mu,\nu$ are weights such that $\mu,\nu^{q/p}\in A_1(\mathbb{R}^d)$ or $\mu\nu^{-q/p'}\in A_1(\mathbb{R}^d)$ and $\nu^q\in A_\infty(\mu\nu^{-q/p'})$, then there exists a positive constant $C$ such that for every $t>0$ and every $f\in L_c^\infty(\mathbb{R}^d)$
\begin{align*}
\mu\nu^{q/p}\Big(\Big\{x\in\mathbb{R}^d:\frac{T(f\nu)(x)}{\nu(x)}>t\Big\}\Big)^{1/q}\leq\frac{C}{t}
\Big(\int_{\mathbb{R}^d}|f(x)|^p\mu(x)^{p/q}\nu(x)dx\Big)^{1/p},
\end{align*}
where $T$ is the fractional maximal operator or $I_\alpha$.
\end{theorem}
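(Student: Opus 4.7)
The plan is to reduce $T=I_\alpha$ to the fractional maximal operator $M_\alpha$ and then attack the latter by a Sawyer-type stopping-time argument adapted to the weight $\nu$. Setting $u=\mu\nu^{q/p}$, the target inequality can be rewritten as
$$u\bigl(\{x\in\R^d:M_\alpha(f\nu)(x)>t\,\nu(x)\}\bigr)^{1/q}\leq \frac{C}{t}\Bigl(\int_{\R^d}|f|^p\mu^{p/q}\nu\,dx\Bigr)^{1/p},$$
and by standard density we may assume $f\ge 0$ is bounded with compact support.

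To pass from $I_\alpha$ to $M_\alpha$, I would invoke the pointwise $\alpha$-sparse domination $I_\alpha g(x)\lesssim \sum_{Q\in\scriptS}|Q|^{\alpha/d}\langle g\rangle_Q\,\chi_Q(x)$, and verify that such a sparse $\alpha$-operator obeys the same mixed weak-type bound as $M_\alpha$; this is natural because both act through the $\alpha$-averages $|Q|^{\alpha/d}\langle g\rangle_Q$ and share the same governing weight class $A_{p,q}$.

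For $M_\alpha$ itself, fix $t>0$ and let $E_t:=\{M_\alpha(f\nu)>t\,\nu\}$. A Whitney/Calder\'on--Zygmund selection of maximal dyadic cubes yields a disjoint family $\{Q_j\}$ covering $E_t$ with $|Q_j|^{\alpha/d}\langle f\nu\rangle_{Q_j}>t\,\nu(x)$ on $Q_j\cap E_t$. Using the hypothesis on $\nu$ (either $\nu^{q/p}\in A_1$ in Case~1, or $\nu^q\in A_\infty(\mu\nu^{-q/p'})$ in Case~2), one replaces $\nu(x)$ by a constant multiple of $\essi_{Q_j}\nu$, so that
$$|Q_j|^{\alpha/d}\langle f\nu\rangle_{Q_j}\gtrsim t\,\essi_{Q_j}\nu.$$
The $A_1$ property of $\mu$ (valid in both cases, directly in Case~1 and via the factorization $\mu=(\mu\nu^{-q/p'})\cdot\nu^{q/p'}$ combined with a reverse H\"older step in Case~2) then gives $u(Q_j)\lesssim |Q_j|(\essi_{Q_j}\mu)(\essi_{Q_j}\nu)^{q/p}$. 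Inserting the stopping lower bound, applying H\"older's inequality with exponents $p$ and $p'$ inside each cube, and summing over the disjoint family produces $\sum_j u(Q_j)\lesssim t^{-q}\|f\|_{L^p(\mu^{p/q}\nu)}^q$, which is the desired inequality.

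The main obstacle lies in Case~2, where only an $A_\infty$ assumption on $\nu^q$ (relative to the base measure $\mu\nu^{-q/p'}$) is available, so the pointwise replacement $\nu(x)\approx\essi_{Q_j}\nu$ is not automatic. One must instead invoke the self-improvement/John--Nirenberg property of $A_\infty$ weights with respect to their base measure to produce, on a major $\mu\nu^{-q/p'}$-subset of $Q_j$, a comparability of $\nu^q$ with its average, and then close the sum on that subset. Dovetailing this with the fractional scaling $|Q_j|^{\alpha/d}$ while keeping the balance of weight powers imposed by $1/q=1/p-\alpha/d$ is the most delicate part of the argument.
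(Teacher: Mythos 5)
This statement is not proved in the paper: it is quoted as Theorem~1.3 from Berra--Carena--Pradolini~\cite{BeCaPr}, and the paper even notes explicitly that the known proof ``relies heavily on extrapolation theorem established in \cite{CrMP} and Coifman type inequality proved in \cite{MW}.'' So there is no ``paper's own proof'' to compare against; one can only assess your sketch on its own terms, and on those terms it has several genuine gaps.

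First, the heart of the difficulty --- which the paper itself flags in its discussion of Sawyer's inequality --- is that the level set $E_t=\{M_\alpha(f\nu)>t\nu\}$ has a \emph{variable} threshold $t\nu(x)$, so the Whitney/Calder\'on--Zygmund machinery does not produce a disjoint family $\{Q_j\}$ with $|Q_j|^{\alpha/d}\langle f\nu\rangle_{Q_j}>t\nu(x)$ holding for all $x\in Q_j\cap E_t$. For $x\in E_t$ there is a cube $Q\ni x$ realizing the supremum, but the inequality involving $\nu(x)$ holds for that particular $x$, not for the whole cube; there is no stopping-time selection that converts the pointwise condition into a per-cube condition. Your proposed fix --- replacing $\nu(x)$ by a multiple of $\essi_{Q_j}\nu$ --- would require a pointwise \emph{upper} bound $\nu(x)\lesssim\essi_{Q_j}\nu$ on $Q_j$, and neither $\nu^{q/p}\in A_1$ nor $\nu^q\in A_\infty(\mu\nu^{-q/p'})$ yields that: $A_1$ only gives the reverse comparability of the average to the essential infimum, and a pointwise upper bound on $\nu$ by its infimum would force $\nu$ to be essentially constant. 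This is precisely the reason the existing proofs do not go through a na\"{\i}ve covering argument but instead use extrapolation or a delicate iterative construction.

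Second, the reduction from $I_\alpha$ to $M_\alpha$ via $\alpha$-sparse domination is not justified. A sparse form $\sum_{Q\in\scriptS}|Q|^{\alpha/d}\langle g\rangle_Q\chi_Q$ is a \emph{sum}, not a supremum, and is generically much larger than $M_\alpha g$; the assertion that it ``obeys the same mixed weak-type bound as $M_\alpha$'' is exactly what would need to be proven and does not follow from the fact that both see $\alpha$-averages. The route in \cite{BeCaPr} instead passes through an $L^q(w)$-norm Coifman inequality $\|I_\alpha f\|_{L^q(w)}\lesssim\|M_\alpha f\|_{L^q(w)}$ for $A_\infty$ weights together with Rubio de Francia extrapolation of mixed weak-type estimates, which is a fundamentally different mechanism. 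Third, your claim in Case~2 that $\mu\in A_1$ follows from $\mu\nu^{-q/p'}\in A_1$ and $\nu^q\in A_\infty(\mu\nu^{-q/p'})$ by ``a reverse H\"older step'' is not correct: the product of an $A_1$ weight and an $A_\infty$-weight-relative-to-it need not lie in $A_1$, and the estimate $u(Q_j)\lesssim|Q_j|(\essi_{Q_j}\mu)(\essi_{Q_j}\nu)^{q/p}$ has no justification under the Case~2 hypotheses. In short, the sketch inherits the structure of an argument that is known to fail for Sawyer-type inequalities without substantial extra input, and the two reductions it relies on (sparse to maximal, and $A_\infty$ to pointwise comparability) are both unproven and, as stated, false.
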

Recently, Berra, Pradolini and Quijano \cite{BePrQu} first established weighted mixed weak type inequalities for Hardy-Littlewood maximal operator and singular integrals in the Schr\"{o}dinger setting. For more works on this topic, we refer readers to see \cite{CalRi,CaIRXY,CaXYa,LiOB,LiPeRiRo,IR} etc.

\subsection{Aims and questions}
The aim of this paper is to continue the line of Li-Rahm-Wick \cite{LRW} and Berra-Pradolini-Quijano \cite{BePrQu} to study the quantitative weighted estimates and weighted mixed weak type inequalities in the Schr\"{o}dinger setting. There are several problems of fractional type integrals associated to $\mathcal{L}$ remain to resolve.

In \cite{LRW}, the authors obtained quantitative weighted strong $(p,q)$ type estimates for $\mathcal{L}^{-\alpha/2}$. It is very natural to ask the following question.\\
\textbf{Question 1:} Can we establish quantitative weighted endpoint estimates for $\mathcal{L}^{-\alpha/2}$?

Berra et al. \cite{BeCaPr} obtained weighted mixed weak type inequalities for fractional maximal operator and fractional integral operators associated to $\Delta$. The proof of Theorem \ref{mix fractional} relies heavily on extrapolation theorem established in \cite{CrMP} and Coifman type inequality proved in \cite{MW}. However, the classes of weights associated to $\mathcal{L}$ is larger than classes of classical Muckenhoupt weights. Besides, deficiency of the regularity that $-\Delta$ possesses, operator of the form $\mathcal{L}=-\Delta+V$ present many challenges. Hence, the technique used in \cite{BeCaPr} may not be applied to fractional integral operators.\\
\textbf{Question 2:} How to achieve the weighted mixed weak type inequalities for $M_\alpha^{\rho,\theta}$ and $\mathcal{L}^{-\alpha/2}$?

\subsection{Main results}
Our first result is concerned with the quantitative weighted estimates for $\mathcal{L}^{-\alpha/2}$.
\begin{theorem}\label{schrodinger fractional weak type}
Let $0\leq\alpha<d$, $1\leq p<d/\alpha$, $1/q=1/p-\alpha/d$ and $\rho$ be a critical radius function. Let $\gamma=\theta/(1+p'/q)$ with $\theta\geq0$.
\medskip

{\rm (i)}
For $\omega\in A_{p,q}^{\rho,\gamma/3}(\mathbb{R}^d)$,
\begin{align*}
\|\mathcal{L}^{-\alpha/2}f\|_{L^q(\omega^q)}\lesssim[\omega]_{A_{p,q}^{\rho,\gamma/3}(\mathbb{R}^d)}
^{(1-\frac{\alpha}{d})\max\{1,\frac{p'}{q}\}}\|f\|_{L^p(\omega^p)}, ~1<p<d/\alpha;
\end{align*}
\medskip

{\rm (ii)}For $\omega\in A_{p,q}^{\rho,\theta}(\mathbb{R}^d)$,
\begin{align*}
\|\mathcal{L}^{-\alpha/2}f\|_{L^{q,\infty}(\omega^q)}\lesssim
[\omega]_{A_{p,q}^{\rho,\theta}(\mathbb{R}^d)}^{1-\alpha/d}
\|f\|_{L^p(\omega^p)}.
\end{align*}
\end{theorem}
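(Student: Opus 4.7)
The plan is to derive both parts of Theorem \ref{schrodinger fractional weak type} from a single pointwise sparse domination. I would first establish that for each admissible $f$ there is a $(\rho,\theta)$-sparse family $\mathcal{S}=\mathcal{S}(f)$ of cubes such that
\begin{align*}
|\mathcal{L}^{-\alpha/2} f(x)| \lesssim \mathcal{A}^{\alpha,\theta}_{\mathcal{S}} f(x) := \sum_{Q\in\mathcal{S}} \psi_\theta(Q)\,|Q|^{\alpha/d}\,\Big(\frac{1}{|Q|}\int_Q|f|\Big)\chi_Q(x).
\end{align*}
Such a domination should follow from the Calder\'{o}n--Zygmund-type off-diagonal decay of the kernel of $\mathcal{L}^{-\alpha/2}$ (comparable to $|x-y|^{\alpha-d}$ up to a critical-radius perturbation) through a Lerner-style principal-set construction adapted to the $\rho$-scales, in line with the sparse machinery of \cite{LRW}. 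Both parts of the theorem then reduce to quantitative weighted bounds for $\mathcal{A}^{\alpha,\theta}_{\mathcal{S}}$.

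For part (i) I would apply the duality method of Lacey--Moen--P\'{e}rez--Torres. Pairing $\mathcal{A}^{\alpha,\theta}_{\mathcal{S}} f$ against a unit-norm $g\in L^{q'}(\omega^{-q'})$ produces the bilinear sum $\sum_{Q\in\mathcal{S}} \psi_\theta(Q)\,|Q|^{\alpha/d}\,\langle|f|\rangle_Q\,\langle g\omega^q\rangle_Q\,|Q|$. The key step is to split $\psi_\theta(Q)=\psi_\gamma(Q)\,\psi_{\gamma p'/q}(Q)$, which is permissible because $\gamma(1+p'/q)=\theta$ by the very definition of $\gamma$; one factor is absorbed into the $f$-side average through an $A_{p,q}^{\rho,\gamma/3}$-weighted Carleson embedding at exponent $p$, and the other into the $g$-side average through the dual embedding at exponent $q'$. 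The $\max\{1,p'/q\}$ exponent emerges from balancing which side dominates, and this cleaner splitting replaces the opaque parameter $K$ in Theorem \ref{schrodinger fractional strong type}.

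For part (ii) I bypass duality and establish the weak-type bound for $\mathcal{A}^{\alpha,\theta}_{\mathcal{S}}$ directly via a stopping-time argument. For each $t>0$, select the maximal cubes $P\in\mathcal{S}$ satisfying $\psi_\theta(P)\,|P|^{\alpha/d}\,\langle|f|\rangle_P > c\,t$; the level set $\{\mathcal{A}^{\alpha,\theta}_{\mathcal{S}} f > t\}$ is then contained (modulo a constant factor) in the union of these principal cubes. On each selected $P$, a single application of the $A_{p,q}^{\rho,\theta}$ condition bounds $\omega^q(P)^{1/q}\,\omega^{-p'}(P)^{1/p'}$ in terms of $[\omega]_{A_{p,q}^{\rho,\theta}}\,\psi_\theta(P)\,|P|^{1-\alpha/d}$, and combining with the selection inequality and H\"{o}lder applied to $\int_P |f|$ yields $\omega^q(P)^{1/q}\lesssim t^{-1}[\omega]_{A_{p,q}^{\rho,\theta}}^{1-\alpha/d}\big(\int_P |f|^p\omega^p\big)^{1/p}$. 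Summing over the pairwise-disjoint principal cubes (using the sparseness of $\mathcal{S}$) and invoking $q\geq p$ to pass from $\ell^q$ to $\ell^p$ closes the estimate.

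The principal obstacle lies in part (ii). The duality splitting $\psi_\theta=\psi_\gamma\cdot\psi_{\gamma p'/q}$ that drives part (i) is unavailable at the weak endpoint, so the full $\psi_\theta$ factor in $\mathcal{A}^{\alpha,\theta}_{\mathcal{S}}$ must be absorbed by a single use of the $A_{p,q}^{\rho,\theta}$ inequality on the principal cube. Extracting exactly one power $[\omega]_{A_{p,q}^{\rho,\theta}}^{1-\alpha/d}$, without any stray $p'/q$-correction, requires that the principal-cube geometry align precisely with the $\psi_\theta$-weighted sparse packing so that the $\psi_\theta(P)$ from the selection rule cancels the one produced by $A_{p,q}^{\rho,\theta}$; making this cancellation rigorous is the delicate new ingredient that was missing from \cite{LRW}.
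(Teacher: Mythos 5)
Your proposal takes a genuinely different route from the paper. The paper does not construct a sparse domination for $\mathcal{L}^{-\alpha/2}$ at all. Instead it splits $f = f\chi_{B_x}+f\chi_{B_x^c}$ with $B_x=B(x,\rho(x))$ and uses two pointwise bounds: $|\mathcal{L}^{-\alpha/2}(f\chi_{B_x})(x)|\leq I_\alpha(|f\chi_{B_x}|)(x)$ from the Gaussian heat-kernel bound, and $|\mathcal{L}^{-\alpha/2}(f\chi_{B_x^c})(x)|\lesssim M_\alpha^{\rho,\theta}f(x)$ from Kurata's decay estimate. The local piece is then dispatched by covering $\mathbb{R}^d$ with critical balls, observing that an $A_{p,q}^{\rho,loc}$ weight restricted to a dilated critical ball is an ordinary $A_{p,q}$ weight there, extending it to a global $A_{p,q}$ weight with comparable constant (Lemma \ref{extension}), and citing the classical Lacey--Moen--P\'erez--Torres bounds; the global piece is bounded by citing Theorem \ref{schrodinger fractional strong type} for the strong type and Lemma \ref{weak type schrodinger maximal operator} for the weak type. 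So the paper entirely avoids proving a new sparse domination, whereas your plan requires one.

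There is also a concrete sign error in your sparse form that breaks the argument as written. You posit $\mathcal{A}^{\alpha,\theta}_{\mathcal{S}} f(x) = \sum_{Q\in\mathcal{S}}\psi_\theta(Q)|Q|^{\alpha/d}\langle|f|\rangle_Q\chi_Q(x)$. Since $\psi_\theta(Q)=(1+r_Q/\rho(x_Q))^\theta\geq1$, this is \emph{larger} than the classical sparse operator, which is the wrong direction: the Schr\"{o}dinger kernel has \emph{extra} off-diagonal decay, and that decay is what allows the enlarged weight class $A_{p,q}^{\rho,\theta}(\mathbb{R}^d)$. The correct form carries a damping $\psi_\theta(Q)^{-1}$, just as $M_\alpha^{\rho,\theta}$ carries $\psi_\theta(Q)$ in the denominator. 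With your sign, the part (ii) stopping time does not close: your selection rule gives $|P|^{1-\alpha/d}\lesssim t^{-1}\psi_\theta(P)\int_P|f|$, and the $A_{p,q}^{\rho,\theta}$ condition produces a further factor $\psi_\theta(P)^{1-\alpha/d}$, leaving an uncontrolled $\psi_\theta(P)^{2-\alpha/d}\geq1$. With the damping $\psi_\theta(Q)^{-1}$ the selection gives $\psi_\theta(P)|P|^{1-\alpha/d}\lesssim t^{-1}\int_P|f|$, the factor $\psi_\theta(P)^{1-\alpha/d}\leq\psi_\theta(P)$ from the weight inequality cancels against this, and one lands on $\omega^q(P)^{1/q}\lesssim t^{-1}[\omega]_{A_{p,q}^{\rho,\theta}}^{1/q}(\int_P|f|^p\omega^p)^{1/p}$, which upgrades to the claimed $[\omega]^{1-\alpha/d}$ because $1/q\leq1-\alpha/d$ and $[\omega]\geq1$. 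So your scheme is salvageable after the sign correction, but as stated it fails, and even after the fix you must still actually prove the sparse domination (with the $\rho$-damped scales and the local/global dichotomy built in), which you only sketch and which the paper's argument sidesteps entirely.
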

\begin{remark}
This is a full version of Theorem \ref{classical fractional} adapted to the Schr\"{o}dinger setting. Besides, we provide a different method to the quantitative weighted strong $(p,q)$ type estimates for $\mathcal{L}^{-\alpha/2}$. However, this result is not comparable to the one in Theorem \ref{schrodinger fractional strong type}. For instance, if $p'>q$ and $0\leq\alpha<d/2$, our result is better, while the result in Theorem \ref{schrodinger fractional strong type} is better provided that $p'>q$ and $d/2<\alpha<d$.
\end{remark}

Our next theorems give a positive answer to the second question.
\begin{theorem}\label{mix schrodinger fractional maximal}
Let $0\leq\alpha<d$, $1\leq p<d/\alpha$, $1/q=1/p-\alpha/d$ and $\rho$ be a critical function. If $\mu\nu^{-q/p'}\in A_1^\rho(\mathbb{R}^d)$ and $\nu^q\in A_{\infty}^{\rho}(\mu\nu^{-q/p'})$. Then for any $t>0$ and $f\in L_c^\infty(\mathbb{R}^d)$, there exists $\theta\geq0$ such that
\begin{align*}
\mu\nu^{q/p}\Big(\Big\{x\in\mathbb{R}^d:\frac{M_\alpha^{\rho,\theta}(f\nu)(x)}{\nu(x)}>t\Big\}\Big)
^{1/q}\lesssim
\frac{1}{t}\Big(\int_{\mathbb{R}^d}|f(x)|^p\mu(x)^{p/q}\nu(x)dx\Big)^{1/p}.
\end{align*}
\end{theorem}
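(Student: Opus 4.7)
The plan is to argue directly in the Schr\"{o}dinger setting via a sparse decomposition adapted to the critical radius function, bypassing the extrapolation/Coifman scheme of \cite{BeCaPr} that, as pointed out in the introduction, does not cover the $A^{\rho,\theta}$-framework. The starting point is a pointwise sparse domination for the Schr\"{o}dinger fractional maximal operator: for some $\theta'\geq\theta$ and a sparse family $\mathcal{S}$ of dyadic cubes,
\begin{equation*}
M_\alpha^{\rho,\theta}(g)(x)\lesssim\sum_{Q\in\mathcal{S}}\psi_{\theta'}(Q)|Q|^{\alpha/d}\langle g\rangle_Q\mathbf{1}_Q(x).
\end{equation*}
Such a domination can be extracted from the techniques already used in the proof of Theorem \ref{schrodinger fractional weak type} together with a standard Calder\'{o}n--Zygmund-type selection.

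Given the sparse domination, I apply it with $g=f\nu$, fix $t>0$, and perform a principal-cube selection to cover the level set $\Omega_t=\{x:M_\alpha^{\rho,\theta}(f\nu)(x)/\nu(x)>t\}$ by a subfamily $\{Q_j\}\subset\mathcal{S}$ on which an average of the sparse expression exceeds $t\nu(x)$. The problem thereby reduces to the local estimate
\begin{equation*}
\mu\nu^{q/p}(Q_j)^{1/q}\lesssim\frac{1}{t}\Big(\int_{Q_j}|f|^p\mu^{p/q}\nu\,dx\Big)^{1/p},
\end{equation*}
which, combined with a quasi-disjointness / Carleson-packing argument for sparse families adapted to $\rho$, yields the claimed weak-type inequality after summation in $j$.

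To prove the local estimate I would write $\mu\nu^{q/p}=(\mu\nu^{-q/p'})\cdot\nu^{q}$ and exploit the two structural assumptions in tandem: the $A_1^\rho$-property of $\mu\nu^{-q/p'}$ controls $(\mu\nu^{-q/p'})(Q_j)/|Q_j|$ by $\essi_{Q_j}(\mu\nu^{-q/p'})$ up to a $\psi$-power, while the $A_\infty^\rho(\mu\nu^{-q/p'})$-property of $\nu^q$ provides a reverse-H\"{o}lder-type self-improvement relating $\nu^q$-averages on $Q_j$ to $\nu$-averages with a controlled exponent. Combining these with the principal-cube selection inequality $\psi_{\theta'}(Q_j)|Q_j|^{\alpha/d}\langle f\nu\rangle_{Q_j}\gtrsim t\,\essi_{Q_j}\nu$ and rearranging delivers the desired $t^{-1}\|f\|_{L^p}$-type bound on each $Q_j$.

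The main obstacle is controlling the localisation factor $\psi_{\theta'}(Q)$ throughout: each application of an $A_1^\rho$- or $A_\infty^\rho$-inequality on a cube whose radius exceeds $\rho(x_Q)$ introduces a further $\psi$-power, so one must take $\theta$ (and hence $\theta'$) large enough---depending on the $A_1^\rho$- and $A_\infty^\rho$-constants of $\mu\nu^{-q/p'}$ and $\nu^q$---for all such losses to be absorbable. A delicate secondary point is the packing step for the principal cubes, since the classical five-times-covering trick must be replaced by a critical-radius-aware analogue producing uniform bounds in terms of $\theta$. Once the threshold for $\theta$ is properly identified, the combinatorial sum over the sparse family closes and yields the full mixed weak-type inequality.
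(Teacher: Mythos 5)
Your proposal takes a genuinely different route from the paper, but it has a gap at exactly the point the paper's introduction flags as the crux of Sawyer-type estimates. You outline a direct argument: sparse domination of $M_\alpha^{\rho,\theta}$, then a ``principal-cube selection'' covering $\Omega_t=\{x:M_\alpha^{\rho,\theta}(f\nu)(x)/\nu(x)>t\}$, then a local estimate on each selected cube, then a packing sum. The difficulty is that $\Omega_t$ is a level set of the \emph{ratio} $M_\alpha^{\rho,\theta}(f\nu)/\nu$, and because $\nu(x)$ varies pointwise, a Calder\'{o}n--Zygmund stopping-time or Besicovitch covering of $\Omega_t$ by cubes on which ``the sparse expression exceeds $t\nu(x)$'' is not available. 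This is precisely the obstruction the paper records just after \eqref{HL mix}: covering lemmas do not apply to $M(f\nu)/\nu$, and $\mu\nu$ may be very singular. Your proposal does not explain how the principal-cube selection is to be performed in the face of this, and without that the subsequent local estimate and packing step have nothing to sum over. In addition, the sparse domination you posit for $M_\alpha^{\rho,\theta}$ is asserted, not proved; extracting it from the proof of Theorem \ref{schrodinger fractional weak type} is not immediate, since that proof never produces a sparse form, and the extra factors $\psi_{\theta'}(Q)$ you introduce must be tracked through the packing argument, which you leave as an acknowledged open point.

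The paper's actual proof avoids the covering problem entirely. It proves a pointwise H\"{o}lder-type domination (Lemma \ref{maximal pointwise domination}) of the form
\begin{align*}
M_{\alpha}^{\rho,\theta}(f_0\omega_0^{-1})(x)\leq\big[M^{\rho,\theta}(f_0^{p}\omega_0^{-q})(x)\big]^{1/q}\Big(\int_{\mathbb{R}^d}f_0^{p}\Big)^{\alpha/d},
\end{align*}
which transfers the fractional maximal operator to the non-fractional one $M^{\rho,\theta}$ at the cost of a fixed multiplicative constant, and then invokes the known mixed weak-type estimate for $M^{\rho,\theta}$ from Berra--Pradolini--Quijano (Lemma \ref{schrodinger maximal mix weak type}) as a black box. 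The Sawyer-type covering difficulty is thus handled once, inside that cited lemma, rather than re-derived. Your approach, if completed, would amount to re-proving the content of Lemma \ref{schrodinger maximal mix weak type} directly for the fractional operator; that may be feasible along the lines of the sparse approach in \cite{CalRi}, but it requires resolving the covering obstruction for the ratio, and as written the step is missing. I would suggest either proving the covering/selection step in detail (in which case you should be explicit about how the $\nu(x)$ in the denominator is handled, and about the $\rho$-aware packing constant) or switching to the transference route the paper uses.
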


\begin{theorem}\label{mix schrodinger fractional}
Let $0\leq\alpha<d$, $1\leq p<d/\alpha$, $1/q=1/p-\alpha/d$ and $\rho$ be a critical function. If $\mu\nu^{-q/p'}\in A_1^\rho(\mathbb{R}^d)$ and $\nu^q\in A_{\infty}^{\rho}(\mu\nu^{-q/p'})$. Then for any $t>0$ and $f\in L_c^\infty(\mathbb{R}^d)$,
\begin{align*}
\mu\nu^{q/p}\Big(\Big\{x\in\mathbb{R}^d:\frac{\mathcal{L}^{-\alpha/2}(f\nu)(x)}{\nu(x)}>t\Big\}\Big)^{1/q}\lesssim
\frac{1}{t}\Big(\int_{\mathbb{R}^d}|f(x)|^p\mu(x)^{p/q}\nu(x)dx\Big)^{1/p}.
\end{align*}
\end{theorem}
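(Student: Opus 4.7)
The plan is to reduce Theorem \ref{mix schrodinger fractional} to the maximal operator result just established in Theorem \ref{mix schrodinger fractional maximal}, by dominating $\mathcal{L}^{-\alpha/2}$ by $M_\alpha^{\rho,\theta}$ for an appropriate $\theta$. First I would split the operator along the critical radius, writing $\mathcal{L}^{-\alpha/2}f = T_{\mathrm{loc}}f + T_{\mathrm{glo}}f$, where $T_{\mathrm{loc}}$ integrates over $|x-y|\le \rho(x)$ and $T_{\mathrm{glo}}$ over $|x-y|>\rho(x)$. For the global piece, the Schr\"odinger heat-kernel bounds (available since $V\in RH_\sigma$ with $\sigma\ge d/2$) yield the kernel estimate
\[
|K_\alpha(x,y)|\lesssim_N \frac{1}{|x-y|^{d-\alpha}}\Big(1+\frac{|x-y|}{\rho(x)}\Big)^{-N}
\]
for every $N>0$. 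A dyadic annular decomposition of $\{|x-y|>\rho(x)\}$ together with the $\psi_\theta$ factor in the definition of $M_\alpha^{\rho,\theta}$ then produces the pointwise bound $|T_{\mathrm{glo}}f(x)|\lesssim M_\alpha^{\rho,\theta}|f|(x)$ as soon as $N$ is taken large compared with $\theta$.

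For the local piece, the kernel has the classical Calder\'on--Zygmund-type decay $|K_\alpha(x,y)|\lesssim |x-y|^{\alpha-d}$ on small scales, and the sparse domination principle of \cite{LRW} provides a sparse family $\mathcal{S}$ of $\rho$-subcritical cubes such that
\[
|T_{\mathrm{loc}}f(x)|\lesssim \sum_{Q\in\mathcal{S}} |Q|^{\alpha/d}\,\langle |f|\rangle_Q\,\chi_Q(x).
\]
I would then establish the mixed weak type inequality for this sparse operator by adapting the principal-cube stopping argument of \cite{BePrQu, BeCaPr}: select stopping cubes inside $\mathcal{S}$ where $\langle f\nu\rangle_Q$ jumps geometrically, use $\mu\nu^{-q/p'}\in A_1^\rho$ to control the Lebesgue measure of the level set $\{T_{\mathrm{loc}}(f\nu)/\nu>t\}$, and invoke $\nu^q\in A_\infty^\rho(\mu\nu^{-q/p'})$ in reverse-H\"older form to pass between the measures $\mu\nu^{-q/p'}$ and $\mu\nu^{q/p}$. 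The resulting tail bound matches, term by term, the one already produced in Theorem \ref{mix schrodinger fractional maximal}.

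The main obstacle is this last transfer step. Because $\mu\nu^{q/p}$ need not be doubling and the weights only satisfy $\rho$-adapted Muckenhoupt conditions, the principal-cube machinery of \cite{BePrQu, BeCaPr} must be recast in the $\rho$-localized language, with the $\psi_\theta$ factors carefully tracked through the change of measure between $\mu\nu^{-q/p'}$ and $\mu\nu^{q/p}$. A secondary technical point is to choose a single $\theta$ that simultaneously handles the global kernel decay and the sparse argument, so that both halves fall under the single hypothesis $\nu^q\in A_\infty^\rho(\mu\nu^{-q/p'})$ and the conclusion of Theorem \ref{mix schrodinger fractional maximal} can be invoked verbatim.
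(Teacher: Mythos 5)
Your treatment of the global piece matches the paper exactly: the kernel decay with the $\rho$-factor gives the pointwise domination $|\mathcal{L}^{-\alpha/2}f_2(x)|\lesssim M_\alpha^{\rho,\theta}f(x)$, and then Theorem \ref{mix schrodinger fractional maximal} finishes.

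For the local piece, however, you miss the simplifying device that makes the paper's argument short, and instead propose rebuilding the entire mixed weak-type machinery from scratch, which is not carried out and runs into the very obstacles you name. The paper's route is this: cover $\mathbb{R}^d$ by the critical balls $B_j$ from Lemma \ref{covering lemma}. On each dilate $\widetilde{B_j}$, a weight in the $\rho$-localized class restricts to a \emph{classical} Muckenhoupt weight on $\widetilde{B_j}$ (this is the content of \eqref{local claim} and its analogues), and Lemma \ref{extension} then extends that restriction to a classical $A_1(\mathbb{R}^d)$ (resp.\ $A_\infty$) weight on all of $\mathbb{R}^d$ with comparable constant. Thus the pair $(\mu\nu^{-q/p'},\nu^q)$ becomes, on each $\widetilde{B_j}$, a pair of \emph{classical} weights satisfying the hypotheses of Theorem \ref{mix fractional}, so that theorem can be invoked verbatim for $I_\alpha(|f_1|\nu)$ on $B_j$, and the finite-overlap property of the $\widetilde{B_j}$ sums the pieces. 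No $\rho$-adapted principal-cube argument, no sparse domination of $T_{\mathrm{loc}}$, no reverse-H\"{o}lder transfer between $\mu\nu^{-q/p'}$ and $\mu\nu^{q/p}$ is needed; the change of measure happens for free because $\mu\nu^{q/p}=(\mu\nu^{-q/p'})\cdot\nu^q$ and both factors have already been replaced by their classical extensions on $\widetilde{B_j}$.

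The gap in your proposal is therefore concrete: you would need to re-prove a mixed weak-type estimate for a $\rho$-subcritical sparse operator against $\rho$-localized weights, tracking $\psi_\theta$ factors through a stopping-time argument in a possibly non-doubling setting; you correctly flag this as the main obstacle but offer no route past it. That is precisely the work the paper's weight-extension trick is designed to avoid (the paper even advertises this as ``a very simple technique to avoid using the extrapolation theorem and Coifman type inequality''). Without the extension lemma, the argument as sketched does not close.
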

\begin{remark}
When $\nu=p=1$, Theorems \ref{mix schrodinger fractional maximal} and \ref{mix schrodinger fractional} are just the conclusions in \cite{BonHS,Tang}. Hence, Theorems \ref{mix schrodinger fractional maximal} and \ref{mix schrodinger fractional} cover the results in \cite{BonHS,Tang}.
\end{remark}

We organize the rest of the paper as follows. In Section 2, we will give some preliminaries. In Section 3, we will prove Theorem $\ref{schrodinger fractional weak type}$ and the proofs of Theorems \ref{mix schrodinger fractional maximal}, \ref{mix schrodinger fractional} will be given in Section 4.

Throughout the rest of the paper, we denote $f\lesssim g$, $f\thicksim g$ if $f\leq Cg$ and $f\lesssim g \lesssim f$, respectively. For any ball $B:=B(x_B,r_B)\subset \mathbb{R}^d$ and $\sigma>0$, $\chi_B$ represents the characteristic function of $B$ and $\sigma B$ means $B(x_B,\sigma r_B)$.

\section{Preliminaries}
In this section, we introduce some basic definitions and necessary lemmas. We first recall the definition of critical radius function. A function $\rho:\mathbb{R}^d\rightarrow(0,\infty)$ is called critical radius function if there exist constants $C_0$ and $N_0$ such that for any $x,y\in\mathbb{R}^d$,
\begin{align}\label{critical radius function}
C_0^{-1}\rho(x)\Big(1+\frac{|x-y|}{\rho(x)}\Big)^{-N_0}\leq\rho(y)\leq C_0\rho(x)
\Big(1+\frac{|x-y|}{\rho(x)}\Big)^{\frac{N_0}{N_0+1}}.
\end{align}
In particular, let $d\geq3$, $V\in RH_\sigma$ ($\sigma>d/2$) be a non-negative function, not identically zero. Shen \cite{Sh} proved that the function
\begin{align*}
\rho(x)=\sup\Big\{r>0:\frac{1}{r^{d-2}}\int_{B(x,r)}V(x)dx\leq1\Big\}
\end{align*}
is a critical radius function. The following covering lemma is very useful in our proof.
\begin{lemma}{\rm(cf. \cite{DzZ})}\label{covering lemma}
There exists a sequence of points $x_j$ in $\mathbb{R}^d$, so that the family $\{Q_j:=Q(x_j,\rho(x_j))\}_{j\in\mathbb{Z}^+}$ satisfies:\\
\medskip
{\rm (i)} $\bigcup_{j\in\mathbb{Z}^+}Q_j=\mathbb{R}^d$;\\
\medskip
{\rm (ii)} For every $\sigma\geq1$, there exists constants $C,N>0$ such that for any $x\in\mathbb{R}^d$, $\sum_{j\in\mathbb{Z}^+}\chi_{\sigma Q_j}(x)\leq C\sigma^N$.
\end{lemma}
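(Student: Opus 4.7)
The plan is to construct the sequence $\{x_j\}$ via a standard maximal disjoint packing argument, and then to verify the two properties using the two-sided scale comparison in~\eqref{critical radius function}.

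First I would fix a small constant $c\in(0,1/2)$ depending only on $C_0$ and $N_0$, and, by a routine greedy/Zorn argument, select a countable maximal collection of points $\{x_j\}_{j\in\mathbb{Z}^+}\subset\mathbb{R}^d$ for which the cubes $\{Q(x_j,c\rho(x_j))\}_j$ are pairwise disjoint. Countability is automatic since each of these cubes has positive Lebesgue measure.

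For part (i), given $x\in\mathbb{R}^d$, maximality forces $Q(x,c\rho(x))$ to meet some $Q(x_j,c\rho(x_j))$, and hence $|x-x_j|_\infty\leq c(\rho(x)+\rho(x_j))$. A single application of~\eqref{critical radius function} bounds $\rho(x)/\rho(x_j)$ by a constant depending only on $c$, $C_0$ and $N_0$; choosing $c$ small enough then yields $|x-x_j|_\infty\leq\rho(x_j)$, i.e.\ $x\in Q_j$.

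For part (ii), fix $x\in\mathbb{R}^d$ and $\sigma\geq 1$ and set $J(x):=\{j:x\in\sigma Q_j\}$. For each $j\in J(x)$, the inequality $|x-x_j|_\infty\leq\sigma\rho(x_j)$ combined with~\eqref{critical radius function} (applied with $(x_j,x)$ in place of $(x,y)$) produces the two-sided estimate
\begin{align*}
\rho(x)\,\sigma^{-N_0/(N_0+1)}\;\lesssim\;\rho(x_j)\;\lesssim\;\rho(x)\,\sigma^{N_0}.
\end{align*}
Consequently the disjoint cubes $\{Q(x_j,c\rho(x_j))\}_{j\in J(x)}$ all lie inside a fixed cube of sidelength $\lesssim\sigma^{N_0+1}\rho(x)$ centred near $x$, while each has Lebesgue measure $\gtrsim\rho(x)^d\sigma^{-dN_0/(N_0+1)}$; a volume count yields $\#J(x)\lesssim\sigma^{N}$ with $N=d(N_0+1)+dN_0/(N_0+1)$.

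The main technical point is step~(i): the asymmetric exponents $-N_0$ and $N_0/(N_0+1)$ in~\eqref{critical radius function} mean that the one-sided upper bound for $\rho(y)$ in terms of $\rho(x)$ must be iterated to get a genuine two-sided comparison at nearby points, and the separation constant $c$ has to be chosen accordingly. Once this local comparability of $\rho$ is established, part~(ii) reduces to the volume comparison sketched above.
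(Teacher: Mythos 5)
The paper does not actually prove this lemma: it is quoted verbatim from Dziuba\'nski--Zinkiewicz \cite{DzZ} (``cf.\ \cite{DzZ}''), so there is no in-paper argument to compare against. Your blind proof is nevertheless a correct rendition of the standard argument, and it is in fact the same type of argument as in \cite{DzZ}: a maximal disjoint packing at scale $c\rho(\cdot)$, then (i) by maximality plus local two-sided comparability of $\rho$, and (ii) by a volume count over the fixed dilation $\sigma Q_j$ after transferring the scale bound on $\rho(x_j)$ to a containment inside a cube of sidelength $\lesssim\sigma^{N_0+1}\rho(x)$. Your exponent $N=d(N_0+1)+dN_0/(N_0+1)$ is consistent with what that count gives.

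Two small imprecisions worth cleaning up, neither of which is a genuine gap. First, in step (i) you conclude $|x-x_j|_\infty\le\rho(x_j)$ and assert $x\in Q_j$; with the paper's convention that $r_Q$ is the \emph{side-length}, membership in $Q_j=Q(x_j,\rho(x_j))$ requires $|x-x_j|_\infty\le\rho(x_j)/2$, so you need $c$ smaller by a factor of $2$ (harmless). Second, the remark that ``a single application of \eqref{critical radius function} bounds $\rho(x)/\rho(x_j)$'' followed by the remark that the one-sided bound ``must be iterated'' sits a little awkwardly: what actually happens is that the intersection condition gives $|x-x_j|\le c(\rho(x)+\rho(x_j))$, and plugging this into the upper branch of \eqref{critical radius function} yields a \emph{self-referential} inequality $t\le C_0(1+c+ct)^{N_0/(N_0+1)}$ for $t=\rho(x_j)/\rho(x)$; because the exponent $N_0/(N_0+1)<1$, this sublinear constraint bounds $t$ from above, and then the lower branch of \eqref{critical radius function} (or the same estimate with the roles of $x$ and $x_j$ reversed) bounds $t$ from below. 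That is what makes the two-sided comparability work, not iteration per se. If you spell that out, the proof is complete and matches the intent of the cited lemma.
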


Next, we introduce new classes of weights, which are extension of $A_{p,q}$ weights associated to critical radius function introduced in \cite{Tang}.
\begin{definition}
A weight $\omega$ is a locally integrable function on $\mathbb{R}^d$ that verifies $0<\omega(x)<\infty$ almost everywhere. Let $\rho$ be a critical radius function, $\theta\geq0$. We say that $\omega\in A_{p,q}^{\rho,\theta}(\mu)$ $(1<p,q<\infty)$ if
\begin{align*}
&[\omega]_{A_{p,q}^{\rho,\theta}(\mu)}:\\
&\quad=\sup_{Q}\Big(\frac{1}{\psi_\theta(Q)\mu(Q)}\int_{Q}\omega(x)^q\mu(x)
dx\Big)\Big(\frac{1}{\psi_\theta(Q)\mu(Q)}\int_{Q}\omega(x)^{-p'}\mu(x)dx\Big)^{q/p'}<\infty.
\end{align*}
We say that $w\in A_{1,q}^{^{\rho,\theta}}(\mu)$ if
$$[\omega]_{A_{1,q}^{^{\rho,\gamma}}(\mu)}:=\sup_B\Big(\frac{1}{\psi_\theta(Q)\mu(Q)}\int_{Q}
\omega(x)^q\mu(x)dx\Big)\|\omega^{-1}\|_{L^\infty(Q)}^{q}<\infty,$$
where the supremum is taken over all cubes $Q:=Q(x_Q,r_Q)\subset\mathbb{R}^d$.
\end{definition}
Obviously, $A_{p,q}^{\rho,\theta}(1)$ coincides $A_{p,q}^{\rho,\theta}(\mathbb{R}^d)$ (see its definition in \cite{Tang}) when $\mu=1$. If $1/q=1/p-\alpha/n$, one can also check that $\omega\in A_{p,q}^{\rho,\theta}(\mu)$ if and only if $\omega^q\in A_{1+q/p'}^{\rho,\theta}(\mu)$ ($p>1$) and $\omega\in A_{1,q}^{\rho,\theta}(\mu)$ if and only if $\omega^q\in A_{1}^{\rho,\theta}(\mu)$, where $A_{p}^{\rho,\theta}(\mu)$ and $A_{1}^{\rho,\theta}(\mu)$ are the collections of weights that satisfy
\begin{align*}
&[\omega]_{A_p^{\rho,\theta}(\mu)}:\\
&\quad=\sup_{Q}\Big(\frac{1}{\psi_\theta(Q)\mu(Q)}\int_{Q}\omega(x)\mu(x)dx\Big)
\Big(\frac{1}{\psi_\theta(Q)\mu(Q)}\int_{Q}\omega(x)^{1-p'}\mu(x)dx\Big)^{p-1}<\infty,
\end{align*}
and
\begin{align*}
[\omega]_{A_p^{\rho,\theta}(\mu)}:=\sup_{Q}\Big(\frac{1}{\psi_\theta(Q)\mu(Q)}\int_{Q}\omega(x)\mu(x)dx\Big)
\|\omega^{-1}\|_{L^\infty(Q)}<\infty,
\end{align*}
respectively.
For $1\leq p<\infty$ and $1<q<\infty$, we define
$$A_p^\rho(\mu)=\cup_{\theta\geq0}A_{p}^{\rho,\theta}(\mu),~
A_{p,q}^\rho(\mu)=\cup_{\theta\geq0}A_{p,q}^{\rho,\theta}(\mu),~
A_\infty^\rho(\mu)=\cup_{p\geq1}A_{p}^{\rho}(\mu).$$

In our proofs, we also need the following $\rho$-localized weights.
\begin{definition}\label{local weight}
Let $\rho$ be a critical radius function. We say a weight $\omega\in A_{p}^{\rho,loc}(\mathbb{R}^d)$ if
\begin{equation*}
[\omega]_{A_{p}^{\rho,loc}(\mathbb{R}^d)}:=\sup_{Q\in\mathcal{Q}_\rho}\Big(\frac{1}{|Q|}\int_Q
\omega(x)dx\Big)\Big(\frac{1}{|Q|}\int_Q\omega(x)^{-1/(p-1)}dx\Big)^{p-1}<\infty,~1<p<\infty,
\end{equation*}
and we say that $\omega\in A_{1}^{\rho,loc}(\mathbb{R}^n)$ if
\begin{equation*}
[\omega]_{A_{1}^{\rho,loc}(\mathbb{R}^d)}:=\sup_{Q\in\mathcal{Q}_\rho}\Big(\frac{1}{|Q|}\int_Q
\omega(x)dx\Big)\|\omega^{-1}\|_{L^\infty(Q)}<\infty,
\end{equation*}
where $\mathcal{Q}_\rho:=\{Q(x,r):r\leq\rho(x)\}$. Similarly, $A_{p,q}^{\rho,loc}(\mathbb{R}^d)$ and $A_{1,q}^{\rho,loc}(\mathbb{R}^d)$ is defined by
\begin{align*}
[\omega]_{A_{p,q}^{\rho,loc}(\mathbb{R}^d)}:=\sup_{Q\in\mathcal{Q}_\rho}\Big(\frac{1}{|Q|}\int_{Q}\omega(x)^q
dx\Big)\Big(\frac{1}{|Q|}\int_{Q}\omega(x)^{-p'}dx\Big)^{q/p'}<\infty.
\end{align*}
and
\begin{align*}
[\omega]_{A_{1,q}^{\rho,loc}(\mathbb{R}^d)}:=\sup_{Q\in\mathcal{Q}_\rho}\Big(\frac{1}{|Q|}\int_{Q}
\omega(x)^qdx\Big)\|\omega^{-1}\|_{L^\infty(Q)}^{q}<\infty,
\end{align*}
respectively.
\end{definition}
Let $Q_0$ be a cube. If we replace $Q\in\mathcal{Q}_\rho$ by $Q\subset Q_0$ in Definition \ref{local weight}, then we say a weight defined on $Q_0$ belongs to $A_p(Q_0)$ ($p\geq1$).
 
We give a remark about these classes of weights.
\begin{remark}
{\rm (1)} It is trivial that $A_{p}^{\rho,\gamma}(\mathbb{R}^d)\subset A_{p}^{\rho,loc}(\mathbb{R}^d)$ and $[\omega]_{A_{p}^{\rho,loc}(\mathbb{R}^d)}\lesssim[\omega]_{A_{p}^{\rho,\gamma}(\mathbb{R}^d)}$; \\
{\rm (2)} In \cite{BonHS}, the authors proved that $$A_p^{\beta\rho,loc}(\mathbb{R}^d)=A_p^{\rho,loc}(\mathbb{R}^d)$$
with $[\omega]_{A_p^{\rho,loc}(\mathbb{R}^d))}\sim[\omega]_{A_p^{\beta\rho,loc}(\mathbb{R}^d)}$ for any $\beta>1$.\\
{\rm (3)} Analogy with the definition of $A_{p,q}^{\rho,\theta}(\mu)$, we can also define $A_{p}^{\rho,loc}(\mu)$ and $A_p(Q_0,\mu)$.
\end{remark}
Finally, we recall the following lemma, which is concerned with the extension of weights.
\begin{lemma}{\rm(cf. \cite{BonHS})}\label{extension}
Given a cube $Q_0$ in $\mathbb{R}^d$ and a weight $\omega_0\in A_p(Q_0)$, $1\leq p<\infty$, then $\omega_0$ has an extension $\omega\in A_p(\mathbb{R}^d)$ such that for any $x\in Q_0$, $\omega_0(x)=\omega(x)$ and $[\omega]_{A_p(Q_0)}\sim[\omega]_{A_p(\mathbb{R}^d)}$, where the implicit constants are independent of $\omega_0$ and $p$.
\end{lemma}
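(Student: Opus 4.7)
The plan is a two-stage extension of $\omega_0$. After translation and dilation we may assume $Q_0=[0,1]^d$. Out to $3Q_0=[-1,2]^d$, I define $\omega(x)=\omega_0(\Phi(x))$, where $\Phi$ reflects each coordinate of $x$ that lies outside $[0,1]$ back into $[0,1]$. Beyond $3Q_0$ I set $\omega(x)=c_0$, where $c_0=|Q_0|^{-1}\int_{Q_0}\omega_0$ for $p>1$ and $c_0=\essi_{Q_0}\,\omega_0$ for $p=1$. Since $\omega=\omega_0$ on $Q_0$, the reverse inequality $[\omega_0]_{A_p(Q_0)}\le[\omega]_{A_p(\mathbb{R}^d)}$ is automatic, as every cube $Q\subset Q_0$ is a valid test cube for the global $A_p$ supremum.

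To bound $[\omega]_{A_p(\mathbb{R}^d)}\lesssim[\omega_0]_{A_p(Q_0)}$, I would take an arbitrary test cube $Q\subset\mathbb{R}^d$ and split into three regimes. First, if $Q\subset 3Q_0$, partition $Q$ into at most $2^d$ pieces, each contained in a tile of $3Q_0$ on which $\Phi$ is an isometry; a measure-preserving change of variables shows that the averages of $\omega$ and $\omega^{1-p'}$ over $Q$ are comparable to those of $\omega_0$ and $\omega_0^{1-p'}$ over a subcube $\widetilde{Q}\subset Q_0$ of side length comparable to that of $Q$, so the $A_p$ ratio on $Q$ is controlled by $[\omega_0]_{A_p(Q_0)}$. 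Second, if $Q\cap 3Q_0=\emptyset$, then $\omega$ is constant on $Q$ and the $A_p$ ratio equals $1$. Third, if $Q$ straddles $\partial(3Q_0)$, which forces $|Q|\gtrsim|Q_0|$, write
\[
\int_Q\omega=\int_{Q\cap 3Q_0}\omega+c_0|Q\setminus 3Q_0|,\qquad \int_Q\omega^{1-p'}=\int_{Q\cap 3Q_0}\omega^{1-p'}+c_0^{1-p'}|Q\setminus 3Q_0|,
\]
bound the $3Q_0$-pieces via the first case, and observe that the pure constant contribution to the $A_p$ product equals $c_0\cdot c_0^{(1-p')(p-1)}=c_0^{0}=1$ thanks to the exact cancellation $1+(1-p')(p-1)=0$ of the exponents.

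The hardest step will be making the borderline regime $|Q|\sim|Q_0|$ in the third case yield $p$-independent constants. This requires balancing $c_0$ so that both $c_0$ and $c_0^{1-p'}$ are comparable, up to a factor involving only $[\omega_0]_{A_p(Q_0)}$, to the averages $|Q_0|^{-1}\int_{Q_0}\omega_0$ and $|Q_0|^{-1}\int_{Q_0}\omega_0^{1-p'}$. For this I would invoke Jensen's inequality $c_0^{1-p'}\le|Q_0|^{-1}\int_{Q_0}\omega_0^{1-p'}$ (valid because $t\mapsto t^{1-p'}$ is convex on $(0,\infty)$ for $p>1$) together with the reverse estimate obtained by applying the definition of $[\omega_0]_{A_p(Q_0)}$ to the cube $Q_0$ itself; the combination delivers $c_0^{1-p'}$ comparable to the average of $\omega_0^{1-p'}$ with constants that do not depend on $p$. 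The case $p=1$ is handled analogously with $c_0=\essi_{Q_0}\,\omega_0$ and the $A_1$ condition $|Q_0|^{-1}\int_{Q_0}\omega_0\le[\omega_0]_{A_1(Q_0)}\,\essi_{Q_0}\,\omega_0$.
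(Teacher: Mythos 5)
The paper does not actually prove this lemma; it is quoted with the citation ``(cf.\ \cite{BonHS})'' and used as a black box. So there is no in-paper proof to compare against, and the proposal must stand on its own.

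There is a genuine gap in the construction. Your case analysis assumes that a cube $Q$ that meets $\partial(3Q_0)$ must have $|Q|\gtrsim|Q_0|$, but that is false: an arbitrarily small cube can sit astride a face of $3Q_0$, and it is precisely there that the extension breaks down. The folding map $\Phi$ sends $\partial(3Q_0)$ back onto $\partial Q_0$ (in particular onto the vertices of $Q_0$), so on the inner side of $\partial(3Q_0)$ the extended weight $\omega$ reproduces the boundary behaviour of $\omega_0$, while on the outer side it is the fixed finite number $c_0$. An $A_p(Q_0)$ weight may very well be unbounded at a corner of $Q_0$, and then this mismatch destroys the $A_p$ condition. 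Concretely, in $d=1$ take $Q_0=[0,1]$ and $\omega_0(t)=t^{-1/2}$, which has a finite $A_p(Q_0)$ constant for every $p\geq 1$ (e.g.\ $[\omega_0]_{A_2(Q_0)}=4/3$). With your definition, $\omega(t)=(2-t)^{-1/2}\to\infty$ as $t\to 2^-$ while $\omega\equiv c_0=2$ on $(2,\infty)$; for $Q=[2-\eps,2+\eps]$ one finds
\begin{equation*}
\frac{1}{|Q|}\int_Q\omega\cdot\frac{1}{|Q|}\int_Q\omega^{-1}\ \sim\ \frac{\sqrt{\eps}}{\eps}\cdot\frac{\eps}{\eps}\ \sim\ \eps^{-1/2}\ \longrightarrow\ \infty,
\end{equation*}
so $[\omega]_{A_2(\mathbb{R})}=\infty$. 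No choice of finite constant $c_0$ repairs this (a singular boundary value cannot be matched by a constant), and the issue persists for $p=1$ with $c_0=\operatorname*{ess\,inf}_{Q_0}\omega_0$. The construction must be altered so that the possible singularities of $\omega_0$ on $\partial Q_0$ are folded against themselves rather than abutting a constant --- for instance, by extending $\omega_0$ to all of $\mathbb{R}^d$ by even periodic reflection (period $2$ in each coordinate after normalizing $Q_0=[0,1]^d$), and then proving the $A_p$ bound for large cubes via the periodicity of $\omega$ and $\omega^{1-p'}$.

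A secondary problem is the asserted $p$-independence. In the regime $Q\subset 3Q_0$, the overlap of the reflected pieces costs a dimensional factor (at most $2^d$) in each of the two averages, which compounds to $2^{dp}$ in the $A_p$ product. And your Jensen step only gives $c_0^{1-p'}\le |Q_0|^{-1}\int_{Q_0}\omega_0^{1-p'}\le [\omega_0]_{A_p(Q_0)}^{1/(p-1)}\,c_0^{1-p'}$: the comparison factor depends on both $p$ and $\omega_0$, not just on $d$. As written, the argument would at best deliver $[\omega]_{A_p(\mathbb{R}^d)}\le C(d)^{\,p}\,[\omega_0]_{A_p(Q_0)}$, which is weaker than the statement being proved.
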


\section{Quantitative weighted estimates for fractional integral operators associated to Schr\"{o}dinger operator}
In this section, we give the proof of Theorem \ref{schrodinger fractional weak type}. Before this, we first establish the following quantitative weighted estimates for $\rho$-localized classical fractional integral operators.
\begin{lemma}\label{local chuli}
Let $d\geq3$, $\rho$ be a critical radius function and $B_x:=B(x,\rho(x))$ with $x\in\mathbb{R}^d$. Suppose that $0<\alpha<d$, $1\leq p<d/\alpha$ and $1/q=1/p-\alpha/d$.\\
\medskip
{\rm (i)} If $\omega\in A_{p,q}^{\rho,loc}(\mathbb{R}^d)$, there holds
\begin{align*}
\|I_\alpha(f\chi_{B_x})\|_{L^{q,\infty}(\omega^q)}\lesssim
[\omega]_{A_{p,q}^{\rho,loc}(\mathbb{R}^d)}^{1-\frac{\alpha}{d}}\|f\|_{L^p(\omega^p)}.
\end{align*}
\medskip
{\rm (ii)} If $\omega\in A_{p,q}^{\rho,loc}(\mathbb{R}^d)$, there holds
\begin{align*}
\|I_\alpha(f\chi_{B_x})\|_{L^q(\omega^q)}\lesssim[\omega]_{A_{p,q}^{\rho,loc}(\mathbb{R}^d)}
^{(1-\frac{\alpha}{d})\max\{1,\frac{p'}{q}\}}\|f\|_{L^p(\omega^p)}.
\end{align*}
\end{lemma}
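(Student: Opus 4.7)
The plan is to reduce this $\rho$-localized statement to the sharp classical bounds of Theorem \ref{classical fractional} via the extension lemma (Lemma \ref{extension}). First I would fix $Q_0=Q(x,\kappa\rho(x))$, where $\kappa$ is a large absolute constant chosen so that $B_x\subset Q_0$. By \eqref{critical radius function}, $\rho(y)\sim\rho(x)$ throughout $Q_0$, so every subcube of $Q_0$ belongs to $\mathcal{Q}_\rho$, and consequently $[\omega]_{A_{p,q}(Q_0)}\lesssim [\omega]_{A_{p,q}^{\rho,loc}(\mathbb{R}^d)}$. Invoking Lemma \ref{extension} in its natural $A_{p,q}$ form (equivalently, through $\omega^q\in A_{1+q/p'}$), one extends $\omega|_{Q_0}$ to a weight $\tilde\omega\in A_{p,q}(\mathbb{R}^d)$ that agrees with $\omega$ on $Q_0$ and satisfies $[\tilde\omega]_{A_{p,q}(\mathbb{R}^d)}\sim [\omega]_{A_{p,q}(Q_0)}$.

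With the extended weight in hand, I would apply Theorem \ref{classical fractional} directly to $I_\alpha(f\chi_{B_x})$ paired with $\tilde\omega$. This produces both the weak-type bound $\|I_\alpha(f\chi_{B_x})\|_{L^{q,\infty}(\tilde\omega^q)}\lesssim [\tilde\omega]_{A_{p,q}}^{1-\alpha/d}\|f\chi_{B_x}\|_{L^p(\tilde\omega^p)}$ and the corresponding strong-type bound with exponent $(1-\alpha/d)\max\{1,p'/q\}$. Since $\supp(f\chi_{B_x})\subset B_x\subset Q_0$ and $\tilde\omega=\omega$ on $Q_0$, the right-hand sides reduce to constants involving $[\omega]_{A_{p,q}^{\rho,loc}(\mathbb{R}^d)}$ times $\|f\|_{L^p(\omega^p)}$, which is exactly what parts (i) and (ii) demand.

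The main obstacle is relating the $\tilde\omega$-norms delivered by Theorem \ref{classical fractional} to the $\omega$-norms required by the statement, since the two weights need not coincide outside $Q_0$. On $Q_0$ they agree, so nothing is needed there. Outside $Q_0$, the separation $|y-z|\sim|y-x|$ for $z\in B_x$ yields the pointwise decay $|I_\alpha(f\chi_{B_x})(y)|\lesssim|y-x|^{\alpha-d}\int_{B_x}|f|$; decomposing $Q_0^c$ into dyadic annuli $2^{k+1}Q_0\setminus 2^kQ_0$ and exploiting \eqref{critical radius function} to cover each annulus by cubes from $\mathcal{Q}_\rho$ of appropriately tuned size, together with H\"{o}lder's inequality applied to $\int_{B_x}|f|$ (absorbing the resulting factor via the $\omega^{-p'}$ part of the $A_{p,q}^{\rho,loc}$ condition on $B_x$), should suffice to absorb the tail into the main term. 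Parts (i) and (ii) then follow in parallel from the weak and strong halves of Theorem \ref{classical fractional}, respectively.
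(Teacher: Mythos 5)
There is a genuine misreading of the statement that derails the argument. In Lemma~\ref{local chuli}, the norm $\|I_\alpha(f\chi_{B_x})\|_{L^q(\omega^q)}$ is taken over the variable $x$, and the ball $B_x=B(x,\rho(x))$ moves with $x$: the object being estimated is the nonlinear, localized map
\[
x\longmapsto I_\alpha(f\chi_{B_x})(x)=\int_{B(x,\rho(x))}\frac{|f(y)|}{|x-y|^{d-\alpha}}\,dy,
\]
as is clear from how the lemma is invoked in the proof of Theorem~\ref{schrodinger fractional weak type} (there $f_1=f\chi_{B_x}$ and one estimates $\|\mathcal{L}^{-\alpha/2}f_1\|_{L^q(\omega^q)}$ with the norm running over $x$) and from the paper's own proof, which writes $\omega^q(\{x:I_\alpha(f\chi_{B_x})(x)>t\})$. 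Your plan fixes a single $x$, builds one cube $Q_0=Q(x,\kappa\rho(x))$ around it, extends $\omega$ from $Q_0$, and then estimates an $L^q$ norm over a \emph{second} variable, worrying about a tail outside $Q_0$. That is a different statement: it fixes the localizing ball and integrates $I_\alpha(f\chi_{B_x})(y)$ over $y$. There is no single $Q_0$ that serves all the relevant values of $x$, so the extension $\tilde\omega$ you construct depends on $x$ and cannot be fed into a single application of Theorem~\ref{classical fractional}. The entire tail discussion is a red herring for the same reason: once the statement is read correctly, the operator is genuinely local and there is no $Q_0^c$ to worry about.

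The correct mechanism (and the one the paper uses) is a covering argument via Lemma~\ref{covering lemma}: take a family $\{B_j=B(x_j,\rho(x_j))\}_j$ that covers $\mathbb{R}^d$ with bounded overlap, and let $\widetilde{B_j}=\tau B_j$ with $\tau$ chosen so that $B_x\subset\widetilde{B_j}$ whenever $x\in B_j$. For $x\in B_j$ one dominates $I_\alpha(f\chi_{B_x})(x)\le I_\alpha(|f|\chi_{\widetilde{B_j}})(x)$, which is now a \emph{fixed} function. One then proves (this is the paper's \eqref{local claim}) that $\omega^q|_{\widetilde{B_j}}\in A_1(\widetilde{B_j})$ (resp.\ $A_{1+q/p'}(\widetilde{B_j})$) with constant $\lesssim[\omega]_{A_{p,q}^{\rho,loc}}$ uniformly in $j$, extends it via Lemma~\ref{extension} to $\omega_j\in A_1(\mathbb{R}^d)$ (resp.\ $A_{1+q/p'}(\mathbb{R}^d)$), applies Theorem~\ref{classical fractional} on each piece, and sums over $j$ using the finite-overlap property and (for (ii)) the embedding $\ell^p\hookrightarrow\ell^q$ for $p\le q$. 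Your use of the extension lemma and the comparability $[\omega]_{A_{p,q}(Q_0)}\lesssim[\omega]_{A_{p,q}^{\rho,loc}}$ on balls of radius $\sim\rho$ is the right local ingredient; what is missing is the decomposition over a bounded-overlap family of critical balls, which is indispensable and cannot be replaced by a single cube plus a tail estimate.
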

\begin{proof}
(i). We only prove the case $p=1$ since $p>1$ is similar. Let $\tau=1+C_02^{\frac{N_0}{N_0+1}}$, where $C_0$ and $N_0$ are given in \eqref{critical radius function}. Let $\{B_j:B(x_j,\rho(x_j))\}_{j\in\mathbb{N}}$ be the family of balls given by Lemma \ref{covering lemma}. Denote $\widetilde{B_j}=\tau B_j$, then $B_x\subset\widetilde{B_j}$ for any $j\in\mathbb{N}$ and $x\in B_j$. To see this, for any $y\in B_x$, in virtue of \eqref{critical radius function}, we have
\begin{align*}
|y-x_j|&\leq|y-x|+|x-x_j|\leq\rho(x)+\rho(x_j)\\
&\leq C_0\rho(x_j)\Big(1+\frac{\rho(x_j)}{\rho(x_j)}\Big)^{\frac{N_0}{N_0+1}}+\rho(x_j)=\tau\rho(x_j).
\end{align*}
Now, we claim that for any $\omega\in A_{1,q}^{\rho,loc}(\mathbb{R}^d)$ and $j\in\mathbb{N}$, there holds $\omega^q|_{\widetilde{B_j}}\in A_1(\widetilde{B_j})$ and
\begin{align}\label{local claim}
[\omega^q|_{\widetilde{B_j}}]_{A_1(\widetilde{B_j})}\lesssim[\omega]_{A_{1,q}^{\rho,loc}(\mathbb{R}^d)}.
\end{align}
In fact, for any ball $B:=B(x_B,r_B)\subset\widetilde{B_j}$ , we demonstrate it by considering two cases:\\
\textbf{Case 1. $r_B\leq\tau\rho(x_B)$:}\quad one can check that
\begin{align*}
[\omega]_{A_{1,q}^{\rho,loc}(\mathbb{R}^d)}=[\omega^q]_{A_1^{\rho,loc}(\mathbb{R}^d)}\sim
[\omega^q]_{A_1^{\tau\rho,loc}(\mathbb{R}^d)},
\end{align*}
which further implies that
\begin{align*}
\Big(\frac{1}{|B|}\int_B\omega(x)^qdx\Big)\Big(\inf_{B\ni x}\omega(x)^q\Big)^{-1}\leq[\omega^q]_{A_1^{\tau\rho,loc}(\mathbb{R}^d)}\sim
[\omega]_{A_{1,q}^{\rho,loc}(\mathbb{R}^d)}.
\end{align*}
This shows \eqref{local claim}.\\
\textbf{Case 2. $r_B>\tau\rho(x_B)$:} In this case, it is easy to see that $B(x_B,\tau\rho(x_B))\subset B\subset \widetilde{B_j}$. Again by \eqref{critical radius function} and $|x_j-x_B|\leq\tau\rho(x_j)$, we have $\rho(x_B)\sim\rho(x_j)$. Combing these facts, we deduce that $|B|\sim|\widetilde{B_j}|$ and
\begin{align*}
\Big(\frac{1}{|B|}\int_B\omega(x)^qdx\Big)(\inf_{B\ni x}\omega(x)^q)^{-1}&\lesssim
\Big(\frac{1}{|\widetilde{B_j}|}\int_{\widetilde{B_j}}\omega(x)^qdx\Big)\Big(\inf_{\widetilde{B_j}\ni x}\omega(x)^q\Big)^{-1}\\
&\leq[\omega^q]_{A_1^{\tau\rho,loc}(\mathbb{R}^d)}\sim[\omega^q]_{A_{1,q}^{\rho,loc}(\mathbb{R}^d)}.
\end{align*}
This also verifies \eqref{local claim}.

Now we return to the proof of our lemma. For any $j\in\mathbb{N}$, in virtue of Lemma \ref{extension} and \eqref{local claim}, $\omega^q|_{\widetilde{B_j}}$ admits an extension $\omega_j$ on $\mathbb{R}^d$, which satisfies $\omega_j\in A_1(\mathbb{R}^d)$ and
\begin{align*}
[\omega_j]_{A_1(\mathbb{R}^d)}\sim[\omega^q|_{\widetilde{B_j}}]_{A_1(\widetilde{B_j})}
\lesssim[\omega]_{A_{1,q}^{\rho,loc}(\mathbb{R}^d)},
\end{align*}
where the implicit constant is independent of $j$. By making use of Theorem \ref{classical fractional}, we have that for any $\omega\in A_{1,q}(\mathbb{R}^d)$,
\begin{align*}
\|I_\alpha f\|_{L^{q,\infty}(\omega^{q})}\lesssim[\omega]_{A_{1,q}(\mathbb{R}^d)}^{1-\frac{\alpha}{d}}
\|f\|_{L^1(\omega)}.
\end{align*}
Then by Lemma \ref{covering lemma}, we have
\begin{align*}
&t\Big(\omega^q(\{x\in\mathbb{R}^d:I_\alpha(f\chi_{B_x})(x)>t\})\Big)^{1/q}\\
&\quad=t\Big(\omega^q(\{x\in\bigcup_jB_j:I_\alpha(f\chi_{B_x})(x)>t\})\Big)^{1/q}\\
&\quad\leq\sum_jt\Big(\omega^q(\{x\in B_j:I_\alpha(f\chi_{B_x})(x)>t\})\Big)^{1/q}\\
&\quad\leq\sum_jt\Big(\omega_j(\{x\in B_j:I_\alpha(f\chi_{B_x})(x)>t\})\Big)^{1/q}\\
&\quad\lesssim\sum_j[\omega_j]_{A_1(\mathbb{R}^d)}^{1-\frac{\alpha}{d}}
\int_{\widetilde{B_j}}|f(x)|\omega_j(x)^{1/q}dx\\
&\quad\lesssim[\omega]_{A_{1,q}^{\rho,loc}(\mathbb{R}^d)}^{1-\frac{\alpha}{d}}
\sum_j\int_{\widetilde{B_j}}|f(x)|\omega(x)dx\\
&\quad\lesssim[\omega]_{A_{1,q}^{\rho,loc}(\mathbb{R}^d)}^{1-\frac{\alpha}{d}}
\int_{\mathbb{R}^d}|f(x)|\omega(x)dx.
\end{align*}
This completes the proof of (i) of Lemma \ref{local chuli}.

(ii). We still use the notations given in (i). Similar to \eqref{local claim}, we get
\begin{align*}
[\omega^q|_{\widetilde{B_j}}]_{A_{1+q/p'}(\widetilde{B_j})}\lesssim
[\omega]_{A_{p,q}^{\rho,loc}(\mathbb{R}^d)}.
\end{align*}
Furthermore, according to Lemma \ref{extension}, $\omega^q|_{\widetilde{B_j}}$ admits an extension $\omega_j\in A_{1+q/p'}(\mathbb{R}^d)$, which satisfies $\omega_j(x)=\omega^q|_{\widetilde{B_j}}$ for each $x\in\widetilde{B_j}$ and
\begin{align*}
[\omega_j]_{A_{1+q/p'}(\mathbb{R}^d)}\sim[\omega^q|_{\widetilde{B_j}}]_{A_{1+q/p'}(\widetilde{B_j})}
\lesssim[\omega]_{A_{p,q}^{\rho,loc}(\mathbb{R}^d)}.
\end{align*}
Therefore, in virtue of Theorem \ref{classical fractional}, Lemma \ref{covering lemma} and $p<q$, we deduce that
\begin{align*}
\|I_\alpha(f\chi_{B_x})\|_{L^q(\omega^q)}^{p}&\leq\Big(\sum_j\int_{\widetilde{B_j}}
I_\alpha(f\chi_{B_x})(x)^q\omega(x)^qdx\Big)^{p/q}\\
&=\Big(\sum_j\int_{\widetilde{B_j}}I_\alpha(f\chi_{B_x})(x)^q\omega_j(x)dx\Big)^{p/q}\\
&\leq\Big(\sum_j\int_{\mathbb{R}^d}I_\alpha(f\chi_{B_x})(x)^q\omega_j(x)dx\Big)^{p/q}\\
&\lesssim\Big(\sum_j[\omega_j]_{A_{1+q/p'}}^{q\big(1-\frac{\alpha}{d}\big)\max\{1,\frac{p'}{q}\}}
\Big(\int_{\mathbb{R}^d}|f(x)|^p\chi_{B_x}(x)\omega_j(x)^{p/q}dx\Big)^{q/p}\Big)^{p/q}\\
&\lesssim[\omega]_{A_{p,q}^{\rho,loc}(\mathbb{R}^d)}^{p\big(1-\frac{\alpha}{d}\big)\max\{1,\frac{p'}{q}\}}
\Big(\sum_j\Big(\int_{\widetilde{B_j}}|f(x)|^p\omega(x)^pdx\Big)^{q/p}\Big)^{p/q}\\
&\lesssim[\omega]_{A_{p,q}^{\rho,loc}(\mathbb{R}^d)}^{p\big(1-\frac{\alpha}{d}\big)\max\{1,\frac{p'}{q}\}}
\int_{\mathbb{R}^d}|f(x)|^p\omega(x)^pdx.
\end{align*}
\end{proof}

To achieve our main results, we also need the following lemma.
\begin{lemma}{\rm(cf. \cite{WW})}\label{weak type schrodinger maximal operator}
Let $d\geq3$, $\theta\geq0$ and $\rho$ be a critical function. Assume that $0\leq\alpha<d$, $1\leq p<d/\alpha$ and $1/q=1/p-\alpha/d$. Then for $\omega\in A_{p,q}^{\rho,\theta}(\mathbb{R}^d)$ and $f\in L^p(\omega^p)$,
\begin{equation*}
\|M_{\alpha}^{\rho,\theta}f\|_{L^{q,\infty}(\omega^q)}\lesssim [\omega]_{A_{p,q}^{\rho,\theta}(\mathbb{R}^d)}^{1/q}\|f\|_{L^p(\omega^p)}.
\end{equation*}
\end{lemma}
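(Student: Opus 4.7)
The plan is to imitate the classical Hedberg--Wheeden weak-type proof for the fractional maximal operator, adjusted to carry the factor $\psi_\theta(Q)$ appearing both in the definition of $M_\alpha^{\rho,\theta}$ and in the $A_{p,q}^{\rho,\theta}$ class through the estimate. The algebraic backbone is the identity $(1+q/p')/q = 1-\alpha/d$, which follows directly from $1/q = 1/p - \alpha/d$.

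Fix $t>0$ and set $\Omega_t:=\{x\in\mathbb{R}^d: M_\alpha^{\rho,\theta}f(x)>t\}$. For each $x\in\Omega_t$ pick a cube $Q_x\ni x$ with
\begin{equation*}
\int_{Q_x}|f(y)|\,dy > t\,\psi_\theta(Q_x)|Q_x|^{1-\alpha/d},
\end{equation*}
and extract, by a standard Vitali covering (applied after reducing to $f$ of compact support by density), a pairwise disjoint subfamily $\{Q_j\}$ of $\{Q_x\}$ such that $\Omega_t\subset\bigcup_j 5Q_j$. Since $\psi_\theta$ depends only on the ratio $r_Q/\rho(x_Q)$, a direct computation gives $\psi_\theta(5Q_j)\le 6^\theta\psi_\theta(Q_j)$, so $\psi_\theta(5Q_j)$ may be replaced by $\psi_\theta(Q_j)$ up to a dimensional constant.

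For each $j$, Hölder's inequality with exponents $p$ and $p'$, combined with the inclusion $Q_j\subset 5Q_j$ and the $A_{p,q}^{\rho,\theta}(\mathbb{R}^d)$ condition applied on $5Q_j$, yields
\begin{equation*}
\omega^q(5Q_j)^{1/q}\int_{Q_j}|f|\,dy\;\lesssim\;[\omega]_{A_{p,q}^{\rho,\theta}(\mathbb{R}^d)}^{1/q}\bigl(\psi_\theta(Q_j)|Q_j|\bigr)^{1-\alpha/d}\|f\omega\|_{L^p(Q_j)},
\end{equation*}
where the exponent $1-\alpha/d$ is produced by the backbone identity. Combining this with the cube-selection lower bound for $\int_{Q_j}|f|$ and cancelling the common factor $\psi_\theta(Q_j)^{1-\alpha/d}|Q_j|^{1-\alpha/d}$ gives
\begin{equation*}
t\,\omega^q(5Q_j)^{1/q}\;\lesssim\;[\omega]_{A_{p,q}^{\rho,\theta}(\mathbb{R}^d)}^{1/q}\|f\omega\|_{L^p(Q_j)},
\end{equation*}
the harmless leftover factor $\psi_\theta(Q_j)^{-\alpha/d}\le 1$ being discarded.

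Raising to the $q$-th power, summing over the disjoint cubes $Q_j$, and using the embedding $\ell^p\hookrightarrow \ell^q$ (valid since $p\le q$) yields
\begin{equation*}
t^q\omega^q(\Omega_t)\le t^q\sum_j\omega^q(5Q_j)\lesssim [\omega]_{A_{p,q}^{\rho,\theta}(\mathbb{R}^d)}\Bigl(\sum_j\int_{Q_j}|f|^p\omega^p\Bigr)^{q/p}\le [\omega]_{A_{p,q}^{\rho,\theta}(\mathbb{R}^d)}\|f\|_{L^p(\omega^p)}^q,
\end{equation*}
which gives the desired weak-type bound after taking $q$-th roots. The endpoint case $p=1$ is handled by replacing the $L^{p'}$ Hölder factor with $\|\omega^{-1}\|_{L^\infty(Q_j)}$, and the case $\alpha=0$ collapses to a Hardy--Littlewood maximal estimate. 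The main bookkeeping obstacle is matching the exponents of $\psi_\theta(Q_j)$ coming from the cube-selection inequality and from the $A_{p,q}^{\rho,\theta}$ condition; once the identity $1+q/p'=q(1-\alpha/d)$ is noted, these cancel cleanly.
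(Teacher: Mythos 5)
The paper does not prove this lemma; it is quoted from the author's earlier work \cite{WW}, so there is no in-text argument to compare against. That said, your Vitali-covering proof is correct and is exactly the argument one would expect: Hölder on the selected cube $Q_j$, the $A_{p,q}^{\rho,\theta}$ condition on the dilated cube $5Q_j$, the exponent bookkeeping $1/q+1/p'=1-\alpha/d$, the observation that $\psi_\theta(5Q_j)\lesssim_\theta\psi_\theta(Q_j)$ since $5Q_j$ and $Q_j$ share a center, the harmless discarding of $\psi_\theta(Q_j)^{-\alpha/d}\le 1$, and finally disjointness together with $\ell^{p}\hookrightarrow\ell^{q}$ (using $p\le q$). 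The reduction to bounded compactly supported $f$ to control cube sizes for Vitali is also the right move. I see no gap.
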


Now, we are in the position to prove Theorem \ref{schrodinger fractional weak type}.
\begin{proof}[Proof of Theorem \ref{schrodinger fractional weak type}]
(i). For each $f\in C_c^\infty(\mathbb{R}^d)$ and $x\in \mathbb{R}^d$. We split $f$ as $f=f_1+f_2$, where $f_1:=f\chi_{B_x}$, $B_x:=B(x,\rho(x))$. It follows that
\begin{align*}
\mathcal{L}^{-\alpha/2}f(x)=\mathcal{L}^{-\alpha/2}f_1(x)+\mathcal{L}^{-\alpha/2}f_2(x).
\end{align*}
Denote the kernel of $e^{-t\mathcal{L}}$ by $p_t(x,y)$. It is well known that $p_t(x,y)\leq h_t(x-y)$, where $h_t(x-y)$ is the classical heat kernel. Then we have the pointwise inequality
\begin{align}\label{local dominate}
|\mathcal{L}^{-\alpha/2}f_1(x)|\leq I_\alpha(|f_1|)(x).
\end{align}
This together with Lemma \ref{local chuli}, allows us to get that
\begin{align}\label{local estimate}
\|\mathcal{L}^{-\alpha/2}f_1\|_{L^q(\omega^q)}&\leq\|I_\alpha(|f_1|)\|_{L^q(\omega^q)}\\
&\lesssim[\omega]_{A_{p,q}^{\rho,loc}(\mathbb{R}^d)}
^{(1-\frac{\alpha}{d})\max\{1,\frac{p'}{q}\}}\|f\|_{L^p(\omega^p)}\nonumber\\
&\leq[\omega]_{A_{p,q}^{\rho,\gamma/3}(\mathbb{R}^d)}
^{(1-\frac{\alpha}{d})\max\{1,\frac{p'}{q}\}}\|f\|_{L^p(\omega^p)}\nonumber.
\end{align}

Next, we consider $\mathcal{L}^{-\alpha/2}f_2$. Given $N>0$, recall that there is a constant $C_N$ such that for any $x,y\in\mathbb{R}^d$,
\begin{align*}
p_t(x,y)\leq C_Nt^{-d/2}e^{-\frac{|x-y|^2}{5t}}\Big(1+\frac{\sqrt{t}}{\rho(x)}+\frac{\sqrt{t}}{\rho(y)}\Big)^{-N},
\end{align*}
see \cite{Ku}. Thus,
\begin{align*}
|\mathcal{L}^{-\alpha/2}f_2(x)|&\leq\int_0^\infty\int_{B_x^c}p_t(x,y)|f(y)|dyt^{\alpha/2-1}dt\\
&\lesssim\int_0^\infty\int_{B_x^c}e^{-\frac{|x-y|^2}{5t}}
\Big(1+\frac{\sqrt{t}}{\rho(x)}\Big)^{-N}|f(y)|dyt^{\alpha/2-d/2-1}dt\\
&\lesssim\int_{B_x^c}\frac{|f(y)|}{|x-y|^M}dy\int_0^\infty\Big(1+\frac{\sqrt{t}}{\rho(x)}\Big)^{-N}
t^{M/2+\alpha/2-d/2-1}dt,
\end{align*}
where we use $e^{-s}\lesssim s^{-M/2}$ for any $M>0$ in the last inequality. Let $\theta\geq0$, note that for $N\geq M>d-\alpha+\theta$,
\begin{align*}
&\int_0^\infty\Big(1+\frac{\sqrt{t}}{\rho(x)}\Big)^{-N}
t^{M/2+\alpha/2-d/2-1}dt\\
&\quad\leq\int_{0}^{\rho(x)^2}t^{M/2+\alpha/2-d/2-1}dt+\rho(x)^N\int_{\rho(x)^2}^{\infty}
t^{-N/2+M/2-d/2+\alpha/2-1}dt\\
&\quad\lesssim\rho(x)^{M-d+\alpha}.
\end{align*}
Then we can continue the estimate with
\begin{align}\label{global dominate}
|\mathcal{L}^{-\alpha/2}f_2(x)|&\lesssim\sum_{k=0}^{\infty}\int_{2^{k+1}B_x\backslash 2^kB_x}\rho(x)^{M-d+\alpha}\frac{|f(y)|}{(2^k\rho(x))^M}dy\\
&\sim\sum_{k=0}^{\infty}2^{-kM}\frac{2^{(k+1)(d-\alpha+\theta)}}
{(2^{k+1}\rho(x))^{d-\alpha}\Big(1+\frac{2^{k+1}\rho(x)}{\rho(x)}\Big)^{\theta}}\int_{2^{k+1}B_x}|f(y)|dy
\nonumber\\
&\lesssim M_\alpha^{\rho,\theta} f(x)\sum_{k=0}^\infty2^{-k(M-d+\alpha-\theta)}\sim M_\alpha^{\rho,\theta} f(x).\nonumber
\end{align}
In view of Theorem \ref{schrodinger fractional strong type}, we obtain
\begin{align*}
\|\mathcal{L}^{-\alpha/2}f_2\|_{L^q(\omega^q)}\lesssim\|M_\alpha^{\rho,\theta} f\|_{L^q(\omega^q)}
\lesssim[\omega]_{A_{p,q}^{\rho,\gamma/3}(\mathbb{R}^d)}^{(1-\frac{\alpha}{n})}
\|f\|_{L^p(\omega^p)},
\end{align*}
This combines with \eqref{local estimate}, we get
\begin{align*}
\|\mathcal{L}^{-\alpha/2}f\|_{L^q(\omega^q)}\lesssim[\omega]_{A_{p,q}^{\rho,\gamma/3}(\mathbb{R}^d)}
^{(1-\frac{\alpha}{n})\max\{1,\frac{p'}{q}\}}\|f\|_{L^p(\omega^p)}, ~1<p<n/\alpha.
\end{align*}

(ii). By \eqref{local dominate} and \eqref{global dominate}, we have
\begin{align}\label{fractional zong}
\|\mathcal{L}^{-\alpha/2}f\|_{L^{q,\infty}(\omega^q)}&\leq
\|\mathcal{L}^{-\alpha/2}f_1\|_{L^{q,\infty}(\omega^q)}+
\|\mathcal{L}^{-\alpha/2}f_2\|_{L^{q,\infty}(\omega^q)}\\
&\leq\|I_\alpha(|f_1|)\|_{L^{q,\infty}(\omega^q)}+\|M_\alpha^{\rho,\theta}f\|_{L^{q,\infty}
(\omega^q)}\nonumber
\end{align}
Since $[\omega]_{A_{p}^{\rho,loc}(\mathbb{R}^d)}\lesssim[\omega]_{A_{p}^{\rho,\theta}(\mathbb{R}^d)}$, Lemma \ref{local chuli} yields that
\begin{align}\label{fractional fen1}
\|I_\alpha(|f_1|)\|_{L^{q,\infty}(\omega^q)}\lesssim
[\omega]_{A_{p,q}^{\rho,\theta}(\mathbb{R}^d)}^{1-\frac{\alpha}{d}}\|f\|_{L^p(\omega^p)}.
\end{align}
On the other hand, Lemma \ref{weak type schrodinger maximal operator} shows that
\begin{align}\label{fractional fen2}
\|M_\alpha^{\rho,\theta}f\|_{L^{q,\infty}
(\omega^q)}\lesssim
[\omega]_{A_{p,q}^{\rho,\theta}(\mathbb{R}^d)}^{1-\frac{\alpha}{d}}\|f\|_{L^p(\omega^p)}.
\end{align}
Hence, by \eqref{fractional zong}-\eqref{fractional fen2}, we get the expected result.
\end{proof}

\section{Weighted mixed weak type inequalities for fractional type integral operators associated to Schr\"{o}dinger operator}
In this section, we prove Theorems \ref{mix schrodinger fractional maximal} and \ref{mix schrodinger fractional}. To prove Theorem \ref{mix schrodinger fractional maximal}, we adapt some ideals in \cite{BeCaPr}. We need the following lemmas.
\begin{lemma}{\rm(cf. \cite{BePrQu})}\label{schrodinger maximal mix weak type}
Let $\mu_1\in A_1^\rho(\mathbb{R}^d)$ and $\nu_1\in A_\infty^\rho(\mu_1)$. Then there exists $\theta\geq0$ such that for every positive $t$,
\begin{align*}
\mu_1\nu_1\Big(\Big\{x\in\mathbb{R}^d:\frac{M^{\rho,\theta}(f_1\nu_1)(x)}{\nu_1(x)}>t\Big\}\Big)
\leq\frac{C}{t}\int_{\mathbb{R}^d}f_1(x)\mu_1(x)\nu_1(x)dx.
\end{align*}
\end{lemma}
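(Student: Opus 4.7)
The plan is to reduce this Schrödinger-setting mixed weak type inequality to its classical Laplacian analogue of Li-Ombrosi-Pérez \cite{LiOP} by means of the critical-ball covering of Lemma \ref{covering lemma}. First I would fix the covering $\mathbb{R}^d = \bigcup_j Q_j$, $Q_j = Q(x_j,\rho(x_j))$, together with an enlargement $\tilde Q_j = \tau Q_j$ (with $\tau = 1 + C_0 2^{N_0/(N_0+1)}$, as in the proof of Lemma \ref{local chuli}) chosen so that $B(x,\rho(x)) \subset \tilde Q_j$ for every $x \in Q_j$. Then, for each $x \in Q_j$, I would split the supremum defining $M^{\rho,\theta}(f_1\nu_1)(x)$ into a local piece, taken over cubes $Q \ni x$ with $r_Q \leq \rho(x)$, and a global piece, taken over cubes with $r_Q > \rho(x)$. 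On the local piece $\psi_\theta(Q) \sim 1$ and the supremum is pointwise bounded by the classical Hardy-Littlewood maximal function applied to $(f_1\nu_1)\chi_{\tilde Q_j}$; on the global piece the factor $\psi_\theta(Q)^{-1}$ supplies a power-type decay in $r_Q/\rho(x)$ that will absorb the far-away contribution once $\theta$ is chosen sufficiently large.

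For the local piece, I would use Lemma \ref{extension} to extend $\mu_1|_{\tilde Q_j}$ to a global weight $\tilde\mu_j \in A_1(\mathbb{R}^d)$ with $[\tilde\mu_j]_{A_1(\mathbb{R}^d)} \lesssim [\mu_1]_{A_1^\rho(\mathbb{R}^d)}$ uniformly in $j$, and analogously extend $\nu_1|_{\tilde Q_j}$ to a global $\tilde\nu_j$ so that $\tilde\nu_j \in A_\infty(\tilde\mu_j)$ with $A_\infty$ constant controlled by $[\nu_1]_{A_\infty^\rho(\mu_1)}$. Applying the Euclidean mixed weak type inequality of Li-Ombrosi-Pérez \cite{LiOP} (or Cruz-Uribe-Martell-Pérez \cite{CrMP}) to the pair $(\tilde\mu_j,\tilde\nu_j)$ and restricting to $Q_j$ yields
$$\mu_1\nu_1\Big(\Big\{x \in Q_j : \tfrac{M((f_1\nu_1)\chi_{\tilde Q_j})(x)}{\nu_1(x)} > t\Big\}\Big) \lesssim \frac{1}{t}\int_{\tilde Q_j} f_1\,\mu_1\nu_1\,dx,$$
after which summation in $j$ using the bounded-overlap property (ii) of Lemma \ref{covering lemma} closes the local estimate.

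For the global piece, decomposing along dyadic scales $r_Q \sim 2^k \rho(x)$ and exploiting that $\psi_\theta(Q)^{-1} \lesssim 2^{-k\theta/(N_0+1)}$ gives a pointwise estimate of the form
$$M^{\rho,\theta}_{\mathrm{glob}}(f_1\nu_1)(x) \lesssim \sum_{k\geq 0} 2^{-k\theta/(N_0+1)}\,\frac{1}{|Q(x,2^k\rho(x))|}\int_{Q(x,2^k\rho(x))} f_1\nu_1,$$
which, after taking $\theta$ large enough in terms of the dimension, $N_0$ from \eqref{critical radius function}, and the overlap exponent from Lemma \ref{covering lemma}(ii), becomes a convergent geometric series that can be absorbed into the already-controlled local term after a further application of the critical-ball covering. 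The main obstacle will be the joint extension step: Lemma \ref{extension} extends a single $A_p$ weight at a time, but here I need the pair $(\tilde\mu_j,\tilde\nu_j)$ to preserve the relation $\tilde\nu_j \in A_\infty(\tilde\mu_j)$ with constants independent of $j$. This forces one first to verify that $\nu_1 \in A_\infty^\rho(\mu_1)$ descends to the uniform local statement $\nu_1|_{\tilde Q_j} \in A_\infty(\tilde Q_j,\mu_1|_{\tilde Q_j})$, and then to track how $A_\infty$ constants behave under restriction and extension, presumably via a two-weight analogue of Lemma \ref{extension}.
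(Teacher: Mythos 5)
The paper does not prove this lemma; it is imported verbatim from Berra--Pradolini--Quijano \cite{BePrQu} via the ``(cf.)'' citation, so there is no in-paper argument to compare against. Your template --- local/global decomposition along the critical-ball covering, extension of the restricted weights, reduction of the local piece to the Euclidean mixed weak-type theorem, and decay from $\psi_\theta$ for the global piece --- is the natural one for results in this setting, but as written it has two genuine gaps.

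First, the joint-extension step you flag as ``the main obstacle'' is not a loose end but a missing ingredient: Lemma \ref{extension} extends a \emph{single} Muckenhoupt weight, and nothing in the paper (or in the one-weight extension construction of \cite{BonHS}) tells you that a pair $(\mu_1|_{\tilde Q_j},\nu_1|_{\tilde Q_j})$ with $\nu_1\in A_\infty(\tilde Q_j,\mu_1)$ can be simultaneously extended to $(\tilde\mu_j,\tilde\nu_j)$ with $\tilde\mu_j\in A_1(\R^d)$, $\tilde\nu_j\in A_\infty(\tilde\mu_j)$, and $j$-uniform constants. Extending each weight separately need not preserve the relative $A_\infty$ condition outside $\tilde Q_j$, and this relation is exactly what Li--Ombrosi--P\'erez requires. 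A two-weight analogue of Lemma \ref{extension} has to be either proved or cited; without it the local piece does not close.

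Second, the handling of the global piece is not actually carried out. After the dyadic decomposition you arrive at
$M^{\rho,\theta}_{\mathrm{glob}}(f_1\nu_1)(x)\lesssim\sum_{k\ge1}2^{-k\theta/(N_0+1)}\,|Q(x,2^k\rho(x))|^{-1}\int_{Q(x,2^k\rho(x))}f_1\nu_1$.
Each summand is an average over a ball of radius $2^k\rho(x)$, which is \emph{not} contained in any single enlarged critical cube $\tilde Q_j$, so it cannot be ``absorbed into the already-controlled local term'': the local mixed weak-type inequality you established only sees data inside $\tilde Q_j$, whereas the global piece genuinely depends on $f_1\nu_1$ far from $x$ (take $f_1\nu_1$ concentrated outside $\tilde Q_j$; the local term vanishes while the global one does not). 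What is needed is a separate weak-$(1,1)$ estimate for each dyadic scale $k$, with constants summable in $k$ --- either via a covering of $B(x,2^k\rho(x))$ by critical balls with an explicit count (which introduces a factor on the order of $2^{kN}$ from Lemma \ref{covering lemma}(ii) and forces a quantitative choice of $\theta$), or by exploiting structural properties of $A_1^\rho$ and $A_\infty^\rho(\mu_1)$ directly. This is precisely where the Schr\"odinger-setting difficulty sits, and the phrase ``after a further application of the critical-ball covering'' does not bridge it.
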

\begin{lemma}\label{maximal pointwise domination}
Let $0<\alpha<d$, $1\leq p<d/\alpha$, $1/q=1/p-\alpha/d$, $s=1+q/p'$, $\rho$ be a critical radius function and $\theta\geq0$. For each non-negative function $\omega_0$ and $f_0\in L^{p}(\mathbb{R}^d)$, there holds
\begin{equation*}
M_{\alpha}^{\rho,\theta}(f_0\omega_0^{-1})(x)\leq[M^{\rho,\theta}(f_0^{p/s}\omega_0^{-q/s})(x)]
^{s/q}\Big(\int_{\mathbb{R}^d}f_0(x)^{p}dx\Big)^{\alpha/d},
\end{equation*}
and
\begin{equation*}
M_{\alpha}^{\rho,\theta}(f_0\omega_0^{-1})(x)\leq[M^{\rho,\theta}(f_0^{p}\omega_0^{-q})(x)]
^{1/q}\Big(\int_{\mathbb{R}^d}f_0(x)^{p}dx\Big)^{\alpha/d}.
\end{equation*}
\end{lemma}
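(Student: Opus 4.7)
The plan is to prove both pointwise inequalities via Hölder's inequality applied to carefully chosen factorizations of $f_0\omega_0^{-1}$. The algebraic backbone is the identity $s = q(1-\alpha/d)$, which follows from $s = 1+q/p'$ combined with $1/q = 1/p-\alpha/d$; equivalently $s/q = 1-\alpha/d = (d-\alpha)/d$, and $1-p/q = p\alpha/d$. These identities determine exactly which powers of $f_0$ and $\omega_0$ can appear in each factorization.

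For the first inequality I would write
\[
f_0\omega_0^{-1} = \bigl(f_0^{p/s}\omega_0^{-q/s}\bigr)^{s/q}\,f_0^{p\alpha/d},
\]
with exponents balanced via the identities above, and apply Hölder over a cube $Q\ni x$ with conjugate pair $(q/s,\,d/\alpha)$ (whose reciprocals sum to $s/q+\alpha/d = 1$):
\[
\int_Q f_0\omega_0^{-1}\,dy \leq \Bigl(\int_Q f_0^{p/s}\omega_0^{-q/s}\,dy\Bigr)^{s/q}\Bigl(\int_Q f_0^p\,dy\Bigr)^{\alpha/d}.
\]
Dividing by $\psi_\theta(Q)|Q|^{1-\alpha/d} = \psi_\theta(Q)|Q|^{s/q}$ and invoking $\psi_\theta(Q)\geq\psi_\theta(Q)^{s/q}$ (valid since $\psi_\theta(Q)\geq 1$ and $s/q\leq 1$), the first factor is dominated by $[M^{\rho,\theta}(f_0^{p/s}\omega_0^{-q/s})(x)]^{s/q}$, while the second is at most $(\int_{\mathbb{R}^d}f_0^p)^{\alpha/d}$. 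Taking the supremum over cubes $Q\ni x$ produces the first bound.

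For the second inequality I would instead use
\[
f_0\omega_0^{-1} = (f_0^p\omega_0^{-q})^{1/q}\cdot f_0^{p\alpha/d}\cdot 1
\]
and apply the three-factor Hölder inequality with exponents $(q,\,d/\alpha,\,p')$; these are admissible because the Sobolev relation yields $1/q+\alpha/d+1/p' = 1/p+1/p' = 1$. This gives
\[
\int_Q f_0\omega_0^{-1}\,dy \leq \Bigl(\int_Q f_0^p\omega_0^{-q}\,dy\Bigr)^{1/q}\Bigl(\int_Q f_0^p\,dy\Bigr)^{\alpha/d}|Q|^{1/p'}.
\]
Since $1/p'-(1-\alpha/d) = -1/q$, dividing by $\psi_\theta(Q)|Q|^{1-\alpha/d}$ converts the $|Q|$ prefactor into $|Q|^{-1/q}$, and the estimate $\psi_\theta(Q)\leq\psi_\theta(Q)^q$ (valid since $\psi_\theta(Q)\geq 1$ and $q\geq 1$) places $\psi_\theta$ under the $1/q$-power. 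Taking the supremum over $Q\ni x$ yields $[M^{\rho,\theta}(f_0^p\omega_0^{-q})(x)]^{1/q}(\int_{\mathbb{R}^d} f_0^p)^{\alpha/d}$.

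The only point requiring any care is the exponent bookkeeping: one must verify that the chosen Hölder partitions really produce the powers appearing on the right-hand sides, which reduces to the two elementary identities noted above. The two $\psi_\theta(Q)$ comparisons — absorbing under a power $\leq 1$ in the first inequality and over a power $\geq 1$ in the second — both rest solely on $\psi_\theta(Q)\geq 1$. No Schrödinger or weight machinery enters here; this lemma is a purely Hölder-type preparation that feeds into the mixed-weak-type proof of Theorem \ref{mix schrodinger fractional maximal}.
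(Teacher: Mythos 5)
Your proof is correct. For the first inequality, your factorization
$f_0\omega_0^{-1}=(f_0^{p/s}\omega_0^{-q/s})^{s/q}\,f_0^{p\alpha/d}$ followed by H\"older with conjugate exponents $(q/s,\,d/\alpha)$ is exactly the paper's argument, just written more transparently (the paper uses the equivalent but less legible factorization $(f_0^{p/s}\omega_0^{-q/s})^{1-\alpha/d}(f_0^{p/s}\omega_0^{-q/s})^{s/p+\alpha/d-1}\omega_0^{q\alpha/d}$ and then applies H\"older with exponents $d/(d-\alpha)$, $d/\alpha$), and your absorption of $\psi_\theta(Q)$ under the power $s/q\le1$ matches the paper's split $\psi_\theta(Q)^{-1}=\psi_\theta(Q)^{-s/q}\psi_\theta(Q)^{-\alpha/d}$. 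For the second inequality your route genuinely differs: you apply a single three-factor H\"older with exponents $(q,\,d/\alpha,\,p')$ to the factorization $(f_0^p\omega_0^{-q})^{1/q}\cdot f_0^{p\alpha/d}\cdot 1$ and pick up the $|Q|^{1/p'}$ factor, whereas the paper instead reuses the two-factor output of the first part and upgrades $\bigl(\frac{1}{|Q|}\int_Q f_0^{p/s}\omega_0^{-q/s}\bigr)^{s/q}$ to $\bigl(\frac{1}{|Q|}\int_Q f_0^p\omega_0^{-q}\bigr)^{1/q}$ via the power-mean (Jensen) inequality for the convex map $t\mapsto t^s$ (handling $s=1$ separately). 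Both are one-line exponent manipulations and yield identical bounds; your version is arguably the more symmetric of the two and avoids the $s=1$ case split, while the paper's shows the second estimate is literally a corollary of the first.
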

\begin{proof}
Fix $x\in\mathbb{R}^d$. Let $Q:=Q(x_Q,r_Q)$ be a cube and $x\in Q$. Applying H\"{o}lder's inequality with exponents $d/(d-\alpha)$ and $d/\alpha$, we get
\begin{align*}
&\frac{1}{\psi_\theta(Q)|Q|^{1-\frac{\alpha}{d}}}\int_Qf_0\omega_0^{-1}
\nonumber\\
&\quad=\frac{1}{\psi_\theta(Q)|Q|^{1-\frac{\alpha}{d}}}\int_Q
(f_0^{p/s}\omega_0^{-q/s})^{1-\frac{\alpha}{d}}
(f_0^{p/s}\omega_0^{-q/s})^{\frac{s}{p}+\frac{\alpha}{d}-1}\omega_0
^{q\alpha/d}\\
&\quad\leq\frac{1}{\psi_\theta(Q)|Q|^{1-\frac{\alpha}{d}}}
\Big(\int_Qf_0^{p/s}\omega_0^{-q/s}\Big)^{1-\frac{\alpha}{d}}
\Big(\int_Q(f_0^{p/s}\omega_0^{-q/s})^{(\frac{s}{p}+\frac{\alpha}{d}-1)\frac{d}{\alpha}}
\omega_0^q\Big)^{\alpha/d}.
\end{align*}
Note that
$$(f_0^{p/s}\omega_0^{-q/s})^{(\frac{s}{p}+\frac{\alpha}{d}-1)\frac{d}{\alpha}}
\omega_0^{q}=f_0^{p}, ~(1-\alpha/d)/s=1/q.$$
We conclude that
\begin{align*}
&\frac{1}{\psi_\theta(Q)|Q|^{1-\frac{\alpha}{d}}}\int_Qf_0\omega_0^{-1}\\
&\quad\leq\Big(\frac{1}{\psi_\theta(Q)|Q|}
\int_Qf_0^{p/s}\omega_0^{-q/s}\Big)^{s/q}
\Big(\frac{1}{\psi_\theta(Q)}\int_Qf_0^{p}\Big)^{\alpha/d}\\
&\quad\leq\Big(\frac{1}{\psi_\theta(Q)|Q|}
\int_Qf_0^{p/s}\omega_0^{-q/s}\Big)^{s/q}
\Big(\int_Qf_0^{p}\Big)^{\alpha/d}\\
&\quad\leq[M^{\rho,\theta}(f_0^{p/s}\omega_0^{-q/s})(x)]^{s/q}
\Big(\int_{\mathbb{R}^d}f_0^{p}\Big)^{\alpha/d}.
\end{align*}
Taking a supremum over all cubes contained $x$, we arrive at the first conclusion.

To achieve the second result. If $s=1$, the second result follows by the first conclusion. If $s>1$, by using
$$\frac{1}{\psi_\theta(Q)}\leq\frac{1}{\psi_\theta(Q)^{1/p}}=\frac{1}{\psi_\theta(Q)^{1/q+\alpha/d}},$$
and
\begin{align*}
\Big(\frac{1}{|Q|}\int_Qf_0^{p/s}\omega_0^{-q/s}\Big)^{1-\frac{\alpha}{d}}=
\Big(\frac{1}{|Q|}\int_Qf_0^{p/s}\omega_0^{-q/s}\Big)^{s/q}\leq
\Big(\frac{1}{|Q|}\int_Qf_0^{p}\omega_0^{-q}\Big)^{1/q}.
\end{align*}
We deduce that
\begin{align*}
&\frac{1}{\psi_\theta(Q)|Q|^{1-\frac{\alpha}{d}}}\int_Qf_0\omega_0^{-1}\\
&\quad\leq\frac{1}{\psi_\theta(Q)}
\Big(\frac{1}{|Q|}\int_Qf_0^{p/s}\omega_0^{-q/s}\Big)^{1-\frac{\alpha}{d}}
\Big(\int_Qf_0^{p}\Big)^{\alpha/d}\\
&\quad\leq\Big(\frac{1}{\psi_\theta(Q)|Q|}
\int_Qf_0^{p}\omega_0^{-q}\Big)^{1/q}
\Big(\frac{1}{\psi_\theta(Q)}\int_Qf_0^{p}\Big)^{\alpha/d}\\
&\quad\leq[M^{\rho,\theta}(f_0\omega_0^{-q})(x)]^{1/q}\Big(\int_{\mathbb{R}^d}f_0^{p}\Big)^{\alpha/d}.
\end{align*}
Taking a supremum over all cubes contained $x$, we get the second conclusion.
\end{proof}

\begin{proof}[Proof of Theorem \ref{mix schrodinger fractional maximal}]
Without loss of generality, we shall assume that $f\geq0$. We prove it by considering two cases:\\
\textbf{Case 1. $p=1$:}\quad It is enough to consider $f\mu^{1/q}\nu\in L^1(\mathbb{R}^d)$, since in the other case, there is nothing to prove. Applying Lemma \ref{maximal pointwise domination} with $f_0=f\mu^{1/q}\nu$, $\omega_0=\mu^{1/q}$, we have
\begin{align*}
&\mu\nu^q\Big(\Big\{x\in\mathbb{R}^d:\frac{M_\alpha^{\rho,\theta}(f\nu)(x)}{\nu(x)}>t\Big\}\Big)^{1/q}\\
&\quad=\mu\nu^q\Big(\Big\{x\in\mathbb{R}^d:\frac{M_\alpha^{\rho,\theta}(f\mu^{1/q}\nu\mu^{-1/q})(x)}
{\nu(x)}>t\Big\}\Big)^{1/q}\\
&\quad\leq\mu\nu^q\Big(\Big\{x\in\mathbb{R}^d:\frac{M^{\rho,\theta}(f\mu^{1/q}\nu\mu^{-1})(x)^{1/q}
\|f\mu^{1/q}\nu\|_{L^1(\mathbb{R}^d)}^{\alpha/d}}
{\nu(x)}>t\Big\}\Big)^{1/q}\\
&\quad=\mu\nu^q\Big(\Big\{x\in\mathbb{R}^d:\frac{M^{\rho,\theta}(f\mu^{1/q}\nu\mu^{-1})(x)}
{\nu(x)^q}>\frac{t^q}{\|f\mu^{1/q}\nu\|_{L^1(\mathbb{R}^d)}^{q\alpha/d}}\Big\}\Big)^{1/q}.
\end{align*}
When $p=1$, $\mu\nu^{-q/p'}\in A_1^\rho(\mathbb{R}^d)$ and $\nu^q\in A_{\infty}^{\rho}(\mu\nu^{-q/p'})$ reduce to $\mu\in A_1^\rho(\mathbb{R}^d)$ and $\nu^q\in A_{\infty}^{\rho}(\mu)$. Then by Lemma \ref{schrodinger maximal mix weak type} with $f_1=f\mu^{1/q}\nu\mu^{-1}\nu^{-q}$, $\nu_1=\nu^q$ and $\mu_1=\mu$,
\begin{align*}
&\mu\nu^q\Big(\Big\{x\in\mathbb{R}^d:\frac{M_\alpha^{\rho,\theta}(f\nu)(x)}{\nu(x)}>t\Big\}\Big)^{1/q}\\
&\quad\leq\mu\nu^q\Big(\Big\{x\in\mathbb{R}^d:\frac{M^{\rho,\theta}(f\mu^{1/q}\nu\mu^{-1}\nu^{-q}\nu^q)(x)}
{\nu(x)^q}>\frac{t^q}{\|f\mu^{1/q}\nu\|_{L^1(\mathbb{R}^d)}^{q\alpha/d}}\Big\}\Big)^{1/q}\\
&\quad\lesssim\frac{\|f\mu^{1/q}\nu\|_{L^1(\mathbb{R}^d)}^{\alpha/d}}{t}
\Big(\int_{\mathbb{R}^d}f(x)\mu(x)^{1/q}\nu(x)dx\Big)^{1/q}\\
&\quad=\frac{1}{t}\int_{\mathbb{R}^d}f(x)\mu(x)^{1/q}\nu(x)dx,
\end{align*}
where we have used $\alpha/d+1/q=1$ in the last equality.

\textbf{Case 2. $1<p<d/\alpha$:}\quad Assume that $f^p\mu^{p/q}\nu\in L^1(\mathbb{R}^d)$. We first use the second conclusion of Lemma \ref{maximal pointwise domination} with $f_0=f\nu\mu^{1/q}\nu^{-1/p'}$, $\omega_0=\mu^{1/q}\nu^{-1/p'}$, it yields that
\begin{align*}
M_\alpha^{\rho,\theta}(f\nu)&=M_\alpha^{\rho,\theta}(f\nu\mu^{1/q}\nu^{-1/p'}\mu^{-1/q}\nu^{1/p'})\\
&\leq M^{\rho,\theta}(f^p\nu^p\mu^{p/q}\nu^{-p/p'}\mu^{-1}\nu^{q/p'})
\Big(\int_{\mathbb{R}^d}f^p\nu^p\mu^{p/q}\nu^{-p/p'}\Big)^{\alpha/d}.
\end{align*}
Finally, according to Lemma \ref{schrodinger maximal mix weak type} with $f_1=f_0^p\omega_0^{-q}\nu^{-q}$, $\mu_1=\mu\nu^{-q/p'}$ and $\nu_1=\nu^q$, we obtain
\begin{align*}
&\mu\nu^{q/p}\Big(\Big\{x\in\mathbb{R}^d:\frac{M_{\alpha}^{\rho,\theta}(f\nu)(x)}{\nu(x)}>t\Big\}\Big)
^{p/q}\\
&\quad\leq\mu\nu^{q/p}\Big(\Big\{x\in\mathbb{R}^d:
\frac{M^{\rho,\theta}(f_0^p\omega_0^{-q})(x)}{\nu(x)^q}>\frac{t^q}
{\|f_0^p\|_{L^1(\mathbb{R}^d)}^{\alpha q/d}}\Big\}\Big)^{p/q}\\
&\quad=\mu\nu^{q/p}\Big(\Big\{x\in\mathbb{R}^d:
\frac{M^{\rho,\theta}(f_0^p\omega_0^{-q}\nu^{-q}\nu^q)(x)}{\nu(x)^q}>\frac{t^q}
{\|f_0^p\|_{L^1(\mathbb{R}^d)}^{\alpha q/d}}\Big\}\Big)^{p/q}\\
&\quad\lesssim t^{-p}\Big(\int_{\mathbb{R}^d}f_0^p\Big)^{p\alpha/d}
\Big(\int_{\mathbb{R}^d}f_0^p\omega_0^{-q}\nu^{-q}\mu\nu^{q/p}\Big)^{p/q}\\
&\quad=t^{-p}\int_{\mathbb{R}^d}f^p\mu^{p/q}\nu.
\end{align*}
\end{proof}

Now, we devoted to proving Theorem \ref{mix schrodinger fractional}. We use a very simple technique to avoid using the extrapolation theorem and Coifman type inequality as in \cite{BeCaPr}.
\begin{proof}[Proof of Theorem \ref{mix schrodinger fractional}]
We use notations in the proof of Theorem \ref{schrodinger fractional weak type}. We have proved that
\begin{align*}
|\mathcal{L}^{-\alpha/2}f(x)|\leq I_\alpha(|f_1|)(x)+|\mathcal{L}^{-\alpha/2}f_2(x)|,~x\in\mathbb{R}^d.
\end{align*}
It follows that
\begin{align}\label{fraction zuizong}
&\mu\nu^{q/p}\Big(\Big\{x\in\mathbb{R}^d:\frac{|\mathcal{L}^{-\alpha/2}(f\nu)(x)|}{\nu(x)}>t\Big\}\Big)
^{p/q}\\
&\quad\leq\mu\nu^{q/p}\Big(\Big\{x\in \mathbb{R}^d:\frac{|\mathcal{L}^{-\alpha/2}(f_2\nu)(x)|}{\nu(x)}>t/2\Big\}\Big)
^{p/q}\nonumber\\
&\qquad+\sum_{j\in\mathbb{N}}\mu\nu^{q/p}\Big(\Big\{x\in B_j:\frac{I_\alpha(|f_1|\nu)(x)}{\nu(x)}>t/2\Big\}\Big)
^{p/q}.\nonumber
\end{align}

Using \eqref{global dominate} and Theorem \ref{mix schrodinger fractional maximal}, we obtain
\begin{align}\label{fractional fen1}
&\mu\nu^{q/p}\Big(\Big\{x\in \mathbb{R}^d:\frac{|\mathcal{L}^{-\alpha/2}(f_2\nu)(x)|}{\nu(x)}>t/2\Big\}\Big)^{p/q}\\
&\quad\lesssim\mu\nu^{q/p}\Big(\Big\{x\in \mathbb{R}^d:\frac{M_\alpha^{\rho,\theta}(f_2\nu)(x)}{\nu(x)}>t/2\Big\}\Big)
^{p/q}\nonumber\\
&\quad\lesssim\frac{1}{t^p}\int_{\mathbb{R}^d}|f(x)|^p\mu(x)^{p/q}\nu(x)dx.\nonumber
\end{align}

On the other hand, since $A_{s}^{\rho,\theta}(\mathbb{R}^d)\subset A_{s}^{\rho,loc}(\mathbb{R}^d)$, by the assumption of $\nu$, we have $\nu^q\in A_{s}^{\rho,loc}(\mu\nu^{-q/p'})$ for some $s>1$. The same reasoning as \eqref{local claim}, we get $\nu^q|_{\widetilde{B_j}}\in A_s({\widetilde{B_j},\mu\nu^{-q/p'}})$. Then $\nu^q|_{\widetilde{B_j}}$ admits an extension $\nu_j^q$ which satisfies $\nu_j^q\in A_s(\mu\nu^{-q/p'})\subset A_\infty(\mu\nu^{-q/p'})$. Denote $\omega=\mu\nu^{-q/p'}$. Similarly, $\omega|_{\widetilde{B_j}}$ admits an extension $\omega_j$ which satisfies $\omega_j\in A_1(\mathbb{R}^d)$. Therefore, by Theorem \ref{mix fractional},
\begin{align*}\label{}
&\mu\nu^{q/p}\Big(\Big\{x\in B_j:\frac{I_\alpha(|f_1|\nu)(x)}{\nu(x)}>t\Big\}\Big)^{p/q}\\
&\quad=\mu\nu^{-q/p'}\nu^q\Big(\Big\{x\in B_j:\frac{I_\alpha(|f_1|\nu)(x)}{\nu(x)}>t\Big\}\Big)^{p/q}\\
&\quad=\omega_j\nu_j^q\Big(\Big\{x\in B_j:\frac{I_\alpha(|f_1|\nu)(x)}{\nu(x)}>t\Big\}\Big)^{p/q}\\
&\quad\leq\omega_j\nu_j^q\Big(\Big\{x\in \mathbb{R}^d:\frac{I_\alpha(|f_1|\nu)(x)}{\nu(x)}>t\Big\}\Big)^{p/q}\\
&\quad\lesssim t^{-p}\int_{\mathbb{R}^d}|f(x)|\chi_{\widetilde{B_j}}(x)(\omega_j(x)\nu_j(x)^q)^{p/q}dx\\
&\quad=t^{-p}\int_{\mathbb{R}^d}|f(x)|\chi_{\widetilde{B_j}}(x)\mu^{p/q}(x)\nu(x)dx.
\end{align*}
From this and Lemma \ref{covering lemma}, we have
\begin{align*}
\sum_{j\in\mathbb{N}}\mu\nu^{q/p}\Big(\Big\{x\in B_j:\frac{I_\alpha(|f_1|\nu)(x)}{\nu(x)}>t\Big\}\Big)^{p/q}\lesssim
t^{-p}\int_{\mathbb{R}^d}|f(x)|\mu^{p/q}(x)\nu(x)dx.
\end{align*}
This, together with \eqref{fraction zuizong} and \eqref{fractional fen1}, allows us to get the desired result.
\end{proof}


\end{document}